\newtheorem{mytheo}{Theorem}[section]
\newtheorem{myexp}{Problem}[section]
\newtheorem{lem}[mytheo]{Lemma}
\newtheorem{preli}[mytheo]{Preliminaries}
\newtheorem{rem}[mytheo]{Remark}
\newcounter{remark}
\newcounter{problem}
\def\@upcite#1#2{\textsuperscript{[{#1\if@tempswa , #2\fi}]}}
\title{Exponential integrators preserving first integrals or Lyapunov functions for conservative or dissipative systems }
\def\shortTitle{ENERGY PRESERVING EXPONENTIAL INTEGRATORS}
\def\myAMS{65L04, 65L05, 65M20, 65P10, 65Z05}
\def\myAbstract{
In this paper, combining the ideas of exponential integrators and discrete gradients,
we propose and analyze a new structure-preserving exponential scheme for the conservative or dissipative system $\dot{y} = Q(M y + \nabla U (y))$, where $Q$ is a $d\times d$ skew-symmetric or negative semidefinite real
matrix, $M$ is a $d\times d$ symmetric real matrix, and $U : \mathbb{R}^d\rightarrow\mathbb{R}$ is a differentiable function. We present
two properties of the new scheme. The paper is accompanied by numerical results that demonstrate
the remarkable superiority of our new scheme in comparison with other structure-preserving schemes
in the scientific literature.}
\begin{document}

\bibliographystyle{amsplain}
\author[]{Yu-Wen Li}
\address{Department of Mathematics, Nanjing University, Nanjing 210093,
P.R.China}
\email{farseer1118@sina.cn}
    
\author[]{Xinyuan Wu}
\address{Department of Mathematics, Nanjing University; State Key Laboratory
for Novel Software Technology at Nanjing University, Nanjing 210093,
P.R.China}
\email{xywu@nju.edu.cn}
\subjclass[2010]{Primary \myAMS}
\date{June 29, 2016}
\begin{abstract}\myAbstract\end{abstract}
\maketitle
\markboth{YU-WEN LI AND XINYUAN WU}{\shortTitle}

\section{Introduction}
{The IVP}

\begin{equation}\label{ODE}\begin{aligned}
\dot{y(t)}&=Ay(t)+f(y(t)),\quad {y(t_{0})=y^{0},}
\end{aligned}
\end{equation}
arises most frequently in a variety of applications such as
mechanics, molecular dynamics, quantum physics, circuit simulations
and engineering, where $f: \mathbb{R}^{d}\rightarrow \mathbb{R}^{d}$
and $\cdot$ denotes the derivative operator $\frac{d}{dt}$. An
algorithm for \eqref{ODE} is an exponential integrator if it
{involves the computation of matrix exponential (or related matrix
functions) } and integrates the linear system
\begin{equation*}
\dot{y(t)}-Ay(t)=0
\end{equation*}
exactly. In general, exponential integrators permit larger stepsizes
and achieve higher accuracy than non-exponential ones when
\eqref{ODE} is a very stiff differential equation such as highly
oscillatory ODEs and semi-discrete time-dependent PDEs. Therefore,
numerous exponential algorithms have been proposed {for first-order
{(see, e.g.
{\cite{Berland2006,Cox2002,Hersch1958,Hochbruck1998,Hochbruck2010,Kassam2005,Klein2008,Lawson1967,Pavlov}})}
and second-order (see e.g.
\cite{Deuflhard1979,Franco2002,Gautschi1961,Hairer,Yang2009})} {
ODEs.} {On the other hand, \eqref{ODE} might inherit many important
geometrical/physical structures. For example, the canonical
Hamiltonian system
\begin{equation}\label{ODE2}\begin{aligned}
\dot{y(t)}&=J^{-1}\nabla H(y(t)),\quad {y(t_{0})=y^{0},}
\end{aligned}
\end{equation}
is a special case of \eqref{ODE}, with
\begin{equation*}
J=\left(\begin{array}{cc}O_{d_{1}\times d_{1}}&I_{d_{1}\times d_{1}}\\-I_{d_{1}\times d_{1}}&O_{d_{1}\times d_{1}}\end{array}\right).
\end{equation*}
And the flow of \eqref{ODE2} preserves the symplectic 2-form
$dy\wedge Jdy$ and the function $H(y)$. In the sense of geometric
integration, it is a natural idea to design numerical schemes that
preserve the two structures.} As far as we know, most of research
papers dealing with exponential integrators up to now focus on the
constructions of high-order explicit schemes and fail to {be
structure-preserving} except for
symmetric/symplectic/energy-preserving methods for first-order ODEs
in \cite{Celledoni2008,Cieslinski} and oscillatory second-order ODEs
(see, e.g. \cite{Hairer,Wang2012,Wu2012}). To combine ideas of
exponential integrators and energy-preserving methods, we address
ourselves to the system:
\begin{equation}\label{LNODE}
\begin{aligned}
\dot{y}=Q(My+\nabla U(y)),\quad {y(t_{0})=y^{0},}
\end{aligned}
\end{equation}
{where $Q$ is a $d\times d$ real matrix, $M$ is a $d\times d$
symmetric real matrix and $U : \mathbb{R}^{d}\rightarrow\mathbb{R}$
is a differentiable function. Clearly, \eqref{LNODE} could be
considered as a special class of \eqref{ODE} or the generalization
of \eqref{ODE2}. {However,  \eqref{LNODE} concentratively exhibits
some important structures which should be respected by a
structure-preserving scheme.} Since $M$ is symmetric, $My+\nabla
U(y)$ is the gradient of the function
$H(y)=\frac{1}{2}y^{\intercal}My+U(y)$.} If $Q$ is skew-symmetric,
then \eqref{LNODE} is a conservative system {with the first integral
$H$,} i.e. $H(y(t))$ is constant; If $Q$ is negative semi-definite
(denoted by $Q\leq0$), then \eqref{LNODE} is a dissipative system
{with the Lyapunov function $H$,} i.e. $H(y(t))$ is monotonically
decreasing. In these two cases, $H$ is also called `energy'. It
should be noted that the choice for $A$ in \eqref{ODE} or $M$ in
\eqref{LNODE} is not unique. General speaking, exponential
integrators deal with systems having a major linear term and a
comparably small nonlinear term, i.e. $||A||\gg||\frac{\partial
f}{\partial y}||$. Thus, in order to take advantage of exponential
integrators, the matrix $M$ in \eqref{LNODE} should be chosen such
that $||QM||\gg ||Q Hess(U)||$, where $Hess(U)$ is the Hessian
matrix of $U$. For example, highly oscillatory Hamiltonian systems
can be characterized by \eqref{LNODE} with a dominant linear part,
where $M$ implicitly contain the large frequency component. Up to
now, many energy-preserving or energy-decaying methods have been
proposed in the case of $M=0$ (see, e.g.
\cite{Brugnano2010,Calvo2010,Gonzalez1996,Hairer2010,Hernandez,Maclachlan1998}).
However, these general-purpose methods are not suitable for dealing
with \eqref{LNODE} when $||QM||$ is very large. Firstly, numerical
solutions generated by them are far from accurate. They are
generally implicit and iterative solutions are required at each
step. But the fixed-point iterations for them are not convergent
unless the stepsize is tiny enough. As mentioned at the beginning,
the two obstacles are hopeful of overcoming by introducing
exponential integrators. In \cite{Wang2012}, the authors proposed an
energy-preserving AAVF integrator (a Trigonometric method) dealing
with the second-order Hamiltonian system:
\begin{equation*}
\left\{\begin{aligned}
&\ddot{q}(t)+\tilde{M}q(t)=\nabla V(q(t)),\quad\text{$\tilde{M}$ is a symmetric matrix},\\
&q(t_{0})=q_{0},\quad\dot{q}(t_{0})=\dot{q}_{0},\\
\end{aligned}\right.
\end{equation*}
which falls into the class of \eqref{LNODE} by introducing
$\dot{q}=p$. In this paper, we present and analyse a new exponential
integrator for \eqref{LNODE} which can preserve the first integral
or the Lyapunov function.

The plan of this paper is as follows. In Section \ref{CCDs}, we
construct a general structure-preserving scheme for \eqref{LNODE}.
In Section \ref{CLA}, we discuss two important properties of the
scheme. Then we present a list of problems which can be solved by
this scheme in Section \ref{LECNS}. Numerical results including the
comparison between our new scheme and other structure-preserving
schemes in the literature are shown in section
\ref{NUMEXPs}. The last Section is concerned with the conclusion.

\section{Construction of the structure-preserving scheme for conservative and dissipative
systems}\label{CCDs}
\begin{preli}
Throughout  this paper, {given a holomorphic function $f$ in the
neighborhood of zero ($f(0):=\lim\limits_{z\to 0} f(z)$ if ~$0$ is a
removable singularity )}:
\begin{equation*}
f(z)=\sum_{i=0}^{\infty}\frac{f^{(i)}(0)}{i!}z^{i},
\end{equation*}
and a matrix $A$, the matrix-valued function $f(A)$ is defined by :
\begin{equation*}
f(A)=\sum_{i=0}^{\infty}\frac{f^{(i)}(0)}{i!}A^{i}.
\end{equation*}
$I$ and $O$ always denote identity and zero matrices of appropriate
dimensions respectively. $A^{\frac{1}{2}}$ is a square root (not
necessarily principal) of a symmetric matrix $A$. {If $f^{(i)}(0)=0$
for odd $i$, then $f(A^{\frac{1}{2}})$ is well-defined for every
symmetric $A$ (independent of the choice of $A^{\frac{1}{2}}$).}
Readers are referred to \cite{Higham} for details about functions of
matrices.
\end{preli}

It is well known that the discrete gradient (DG) method is a popular
tool for constructing energy-preserving schemes. Broadly speaking,
$\overline{\nabla}H(y,\hat{y})$ is said to be a discrete gradient of
function $H$ if
\begin{equation}\label{COND}
\left\{\begin{aligned}
&\overline{\nabla}H(y,\hat{y})^{\intercal}(y-\hat{y})=H(y)-H(\hat{y}),\\
&\overline{\nabla }H(y,y)=\nabla H(y).\\
\end{aligned}\right.
\end{equation}
Accordingly,
\begin{equation}\label{DG}
y^{1}=y^{0}+hJ^{-1}\overline{\nabla}H(y^{1},y^{0})
\end{equation}
is called a DG method for the system \eqref{ODE2}. Multiplying
$\overline{\nabla }H(y^{1},y^{0})^{\intercal}$ on both sides of
\eqref{DG} and using the first identity of \eqref{COND}, we obtain
$H(y^{1})=H(y^{0})$, i.e. the scheme \eqref{DG} is
energy-preserving. For more details on the DG method, readers are
referred to \cite{Gonzalez1996,Maclachlan1999}.

On the other hand, most of exponential integrators can be derived
from the variation-of-constants formula for the problem
\eqref{LNODE}:
\begin{equation}\label{VF}
y(t_{0}+h)=\exp(hQM)y(t_{0})+h\int_{0}^{1}\exp((1-\xi)hQM)Q\nabla U(y(t_{0}+\xi h))d\xi.
\end{equation}
Replacing $\nabla U(y(t_{0}+\xi
h))$ with the discrete gradient $\overline{\nabla}U(y^{1},y^{0})$,
the integral in \eqref{VF} can be approximated by :
\begin{equation*}
\begin{aligned}
&\int_{0}^{1}\exp((1-\xi)hQM)Q\nabla U(y(t_{0}+\xi h))d\xi\\
&\approx(\int_{0}^{1}\exp((1-\xi)hQM)d\xi)Q\overline{\nabla}U(y^{1},y^{0})=\varphi(hQM)Q\overline{\nabla}U(y^{1},y^{0}),\\
\end{aligned}
\end{equation*}
where the scalar function is given by
$$\varphi(z)=(\exp(z)-1)/z.$$ Then we obtain the new scheme:
\begin{equation}\label{AAVF}
y^{1}=\exp(V)y^{0}+h\varphi(V)Q\overline{\nabla}U(y^{1},y^{0}),
\end{equation}
where $V=hQM$ and $y^{1}\approx y(t_{0}+h)$.

Due to the energy-preserving property of the DG method, we are hopeful of
preserving the first integral by \eqref{AAVF} when $Q$ is skew. For
convenience, we denote $\overline{\nabla}U(y^{1},y^{0})$ by
$\overline{\nabla}U$ sometimes. To begin with, we give
the following preliminary lemma.
\begin{lem}\label{lem2.1}
For any symmetric matrix $M$ and scalar $h\geq0$,  the matrix
$$B=\exp(hQM)^{\intercal}M\exp(hQM)-M$$ satisfies:
\begin{equation*}
B=\left\{\begin{aligned} &=0 , \quad \text{if $Q$ is skew-symmetric,}\\
&\leq 0, \quad \text{if $Q \leq 0$.}
\end{aligned}\right.
\end{equation*}
\end{lem}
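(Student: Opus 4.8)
The plan is to promote the fixed step size $h$ to a continuous parameter and exploit a first-order differential equation satisfied by the matrix in question. Concretely, set $\Phi(t)=\exp(tQM)$ and define $B(t)=\Phi(t)^{\intercal}M\Phi(t)-M$, so that the matrix $B$ of the statement is $B(h)$. Two observations are immediate and will anchor the argument: first, $B(0)=M-M=0$; second, since $M$ is symmetric, $B(t)^{\intercal}=\Phi(t)^{\intercal}M^{\intercal}\Phi(t)-M^{\intercal}=B(t)$, so $B(t)$ is symmetric for every $t$ and the relations ``$=0$'' and ``$\leq 0$'' in the Loewner order are meaningful.

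Next I would differentiate $B(t)$. Using $\Phi'(t)=QM\,\Phi(t)$, hence $(\Phi^{\intercal})'(t)=\Phi(t)^{\intercal}(QM)^{\intercal}=\Phi(t)^{\intercal}MQ^{\intercal}$ (again by symmetry of $M$), the product rule gives
\begin{equation*}
B'(t)=\Phi(t)^{\intercal}MQ^{\intercal}M\Phi(t)+\Phi(t)^{\intercal}MQM\Phi(t)=\Phi(t)^{\intercal}M(Q+Q^{\intercal})M\Phi(t).
\end{equation*}
This identity is the crux: it isolates the symmetric part $Q+Q^{\intercal}$ of $Q$ and packages everything else into the congruence by the matrix $M\Phi(t)$, which automatically preserves sign in the Loewner order.

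From here the two cases fall out. If $Q$ is skew-symmetric then $Q+Q^{\intercal}=0$, so $B'(t)\equiv 0$; combined with $B(0)=0$ this forces $B(t)\equiv 0$, in particular $B(h)=0$. If $Q\leq 0$, I interpret this (as is standard for a possibly non-symmetric matrix) as $v^{\intercal}Qv\leq 0$ for all $v$, equivalently $Q+Q^{\intercal}\leq 0$. Then for any fixed vector $x$ the scalar function $g_{x}(t)=x^{\intercal}B(t)x$ satisfies $g_{x}(0)=0$ and, setting $v=M\Phi(t)x$, $g_{x}'(t)=v^{\intercal}(Q+Q^{\intercal})v\leq 0$, so $g_{x}$ is non-increasing and $g_{x}(h)\leq g_{x}(0)=0$.

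The one point that needs care --- and the only place where a naive argument could slip --- is the passage from a pointwise sign condition on the derivative $B'(t)$ to a sign condition on $B(h)$ itself, since monotonicity in the Loewner order of matrix-valued functions is subtler than for scalars. Reducing to the scalar functions $g_{x}$ for each fixed $x$, as above, sidesteps this difficulty cleanly: since $x^{\intercal}B(h)x=g_{x}(h)\leq 0$ holds for every $x$ and $B(h)$ is symmetric, we conclude $B(h)\leq 0$, which is the claim.
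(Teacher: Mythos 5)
Your proof is correct, and it reaches the conclusion by a genuinely different (though closely related) route than the paper. The paper argues via solutions of the linear ODE $\dot{y}=QMy$: it fixes an initial vector $y^{0}$, invokes the fact that this system conserves (resp.\ dissipates) the energy $\frac{1}{2}y^{\intercal}My$ when $Q$ is skew-symmetric (resp.\ $Q\leq 0$), deduces $y^{0\intercal}By^{0}=0$ (resp.\ $\leq 0$) for every $y^{0}$, and in the skew case needs one further linear-algebra observation: a vanishing quadratic form makes $B$ skew-symmetric, which together with the symmetry of $B$ forces $B=0$. You instead differentiate the matrix-valued function $B(t)=\exp(tQM)^{\intercal}M\exp(tQM)-M$ directly and obtain the identity $B'(t)=\exp(tQM)^{\intercal}M(Q+Q^{\intercal})M\exp(tQM)$, after which both cases follow by integration from $B(0)=0$ (the semidefinite case via the scalar functions $g_{x}$, which is exactly the right way to pass from a sign on $B'$ to a sign on $B(h)$). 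The two arguments share the same mechanism --- the paper's conservation claim for the linear ODE is precisely your derivative computation, left implicit --- but yours buys full self-containedness, makes explicit that only the symmetric part $Q+Q^{\intercal}$ matters, and in the skew case yields $B\equiv 0$ without the ``symmetric and skew-symmetric implies zero'' step; the paper's version buys brevity and a conceptual link between the lemma and the conservative/dissipative structure it is designed to serve in the two main theorems.
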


\begin{proof}
Consider the linear ODE:
\begin{equation}\label{LP}
\dot{y}(t)=QMy(t).
\end{equation}
When $Q$ is skew, \eqref{LP} is a conservative equation with the
first integral $\frac{1}{2}y^{\intercal}My$, and its exact solution
starting from the initial value {$y(0)=y^{0}$} is
$y(t)=\exp(tQM)y^{0}$. From
$\frac{1}{2}y(h)^{\intercal}My(h)=\frac{1}{2}y^{0\intercal}My^{0}$,
we have
\begin{equation*}
\
\frac{1}{2}y^{0\intercal}\exp(hQM)^{\intercal}M\exp(hQM)y^{0}=\frac{1}{2}y^{0\intercal}My^{0}
\end{equation*}
for any vector $y^{0}$. Therefore,
$B=\exp(hQM)^{\intercal}M\exp(hQM)-M$ is skew-symmetric. Since it is
also symmetric, $B=0$. The case that $Q\leq0$ can be proved in a
similar way.
\end{proof}

\begin{mytheo}\label{Ham}
If $Q$ is skew-symmetric, then the scheme \eqref{AAVF} preserves the first integral $H$ in \eqref{LNODE} exactly :
\begin{equation*}
H(y^{0})=H(y^{1}),
\end{equation*}
where $H(y)=\frac{1}{2}y^{\intercal}My+U(y)$.
\end{mytheo}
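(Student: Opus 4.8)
The plan is to compute $H(y^{1})-H(y^{0})$ directly and show it vanishes, splitting $H(y)=\tfrac12 y^{\intercal}My+U(y)$ into its quadratic and potential parts. For the potential part I would invoke the defining property \eqref{COND} of the discrete gradient to write $U(y^{1})-U(y^{0})=\overline{\nabla}U^{\intercal}(y^{1}-y^{0})$, so that the whole difference is expressed through the increment $y^{1}-y^{0}$ and the scheme \eqref{AAVF}.

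First I would rewrite that increment. Using the scalar identity $V\varphi(V)=\exp(V)-I$ with $V=hQM$, together with the fact that $\varphi(V)$ is a power series in $QM$ and hence commutes with $QM$, the scheme gives $y^{1}-y^{0}=h\varphi(V)Q(My^{0}+\overline{\nabla}U)$. For the quadratic part I would instead expand $\tfrac12(y^{1})^{\intercal}My^{1}$ starting from $y^{1}=\exp(V)y^{0}+h\varphi(V)Q\overline{\nabla}U$; here Lemma \ref{lem2.1} enters decisively, since for skew $Q$ it gives $\exp(V)^{\intercal}M\exp(V)=M$ and therefore kills the pure term $(y^{0})^{\intercal}My^{0}$. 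Collecting everything, $H(y^{1})-H(y^{0})$ becomes a sum of terms that are either bilinear in $y^{0}$ and $\overline{\nabla}U$ or quadratic in $\overline{\nabla}U$.

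I expect the bilinear terms to cancel in a pair. Each is a scalar, so it equals its own transpose; combining this with Lemma \ref{lem2.1} in the form $\exp(V)^{\intercal}M=M\exp(-V)$, the elementary identity $\exp(-V)\varphi(V)=\varphi(-V)$, the transpose relation $V^{\intercal}=-hMQ$ with $\varphi(\cdot)^{\intercal}=\varphi(\cdot^{\intercal})$, and the commutation rule $\varphi(\pm V)Q=Q\varphi(\pm hMQ)$, the skew-symmetry $Q^{\intercal}=-Q$ should turn the term $h(y^{0})^{\intercal}\exp(V)^{\intercal}M\varphi(V)Q\overline{\nabla}U$ into the negative of $h\,\overline{\nabla}U^{\intercal}\varphi(V)QMy^{0}$, forcing their sum to zero.

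The main obstacle is the quadratic-in-$\overline{\nabla}U$ part, which does \emph{not} vanish term by term: it is the sum of the second-order contribution $\tfrac{h^{2}}{2}(\varphi(V)Q\overline{\nabla}U)^{\intercal}M(\varphi(V)Q\overline{\nabla}U)$ coming from the quadratic form and the term $h\,\overline{\nabla}U^{\intercal}\varphi(V)Q\overline{\nabla}U$ coming from the discrete gradient. Writing $\Phi=\varphi(V)Q$, only the symmetric part of $\Phi$ survives in a quadratic form, so the task reduces to proving the operator identity $\tfrac{h}{2}\Phi^{\intercal}M\Phi+\tfrac12(\Phi+\Phi^{\intercal})=0$. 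I would verify this through the matrix-function calculus of the Preliminaries: using $QM=V/h$ and the scalar facts $\varphi(V)\varphi(-V)=(\exp(V)+\exp(-V)-2)/V^{2}$ and $\varphi(V)-\varphi(-V)=(\exp(V)+\exp(-V)-2)/V$, one finds $\tfrac{h}{2}\Phi^{\intercal}M\Phi=-\tfrac12\frac{\exp(V)+\exp(-V)-2}{V}Q$ while $\tfrac12(\Phi+\Phi^{\intercal})=\tfrac12\frac{\exp(V)+\exp(-V)-2}{V}Q$, so the two cancel exactly. This exact matching of the $\varphi$-function identities under $V\mapsto -V$ is the delicate point of the whole argument; once it is in place, the remaining steps are routine bookkeeping.
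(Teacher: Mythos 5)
Your proof is correct, and it takes a genuinely different route from the paper's. The paper first assumes $M$ nonsingular, substitutes $\widetilde{\nabla}U=M^{-1}\overline{\nabla}U$, and collapses the whole difference into $H(y^{1})-H(y^{0})=\frac{1}{2}(y^{0}+\widetilde{\nabla}U)^{\intercal}B(y^{0}+\widetilde{\nabla}U)+\frac{1}{2}\widetilde{\nabla}U^{\intercal}C\widetilde{\nabla}U$, where $B=\exp(V)^{\intercal}M\exp(V)-M=0$ by Lemma \ref{lem2.1} and $C=\exp(V)^{\intercal}M-M\exp(V)$ is skew-symmetric; the singular case is then handled by perturbing $M$ to nonsingular $M_{\varepsilon}$ and letting $\varepsilon\to0$. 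You never invert $M$: you keep $\overline{\nabla}U$ itself, split $H(y^{1})-H(y^{0})$ into pure-$y^{0}$, bilinear, and quadratic-in-$\overline{\nabla}U$ groups, and annihilate each group separately, using Lemma \ref{lem2.1} (in the forms $\exp(V)^{\intercal}M\exp(V)=M$ and $\exp(V)^{\intercal}M=M\exp(-V)$), the transpose/commutation rules $\varphi(V)^{\intercal}=\varphi(-hMQ)$ and $\varphi(hQM)Q=Q\varphi(hMQ)$, and entire-function identities. I verified your two key evaluations: with $\Phi=\varphi(V)Q$ and $g(z)=(e^{z}+e^{-z}-2)/z$, indeed $\frac{h}{2}\Phi^{\intercal}M\Phi=-\frac{1}{2}g(V)Q$ and $\frac{1}{2}\bigl(\Phi+\Phi^{\intercal}\bigr)=\frac{1}{2}g(V)Q$; since $g(z)$ and $g(z)/z$ have removable singularities at $z=0$, these matrix functions are legitimate under the paper's Preliminaries even when $V$ is singular, so no invertibility of $V$ is smuggled in. Your route buys uniformity in $M$: the argument holds verbatim for singular $M$, so you avoid the paper's two-case structure and, in particular, its unproved continuity claim $y^{1}_{\varepsilon}\to y^{1}$ for solutions of the perturbed implicit equations, which strictly speaking requires a fixed-point stability argument. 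The paper's route buys brevity and reuse: all terms collapse into the two quadratic forms $B$ and $C$ using only $V\varphi(V)=\exp(V)-I$, and the identical computation is recycled for the dissipative case (Theorem \ref{Lya}), where only $B\leq0$ is needed; by contrast, your cancellations of the bilinear and quadratic groups use $Q^{\intercal}=-Q$ essentially, so your argument would need reworking to yield $H(y^{1})\leq H(y^{0})$ when $Q\leq0$.
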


\begin{proof}
Here we firstly assume that the matrix $M$ is not singular. We next
calculate $\frac{1}{2}y^{1\intercal}My^{1}$. Let
$M^{-1}\overline{\nabla}U=\widetilde{\nabla}U$. Replacing  $y^{1}$
by $\exp(V)y^{0}+h\varphi(V)Q\overline{\nabla}U(y^{1},y^{0})$ leads
to
\begin{equation}\label{I}
\begin{aligned}
&\frac{1}{2}y^{1\intercal}My^{1}\\
&=\frac{1}{2}(y^{0\intercal}\exp(V)^{\intercal}+h\overline{\nabla}U^{\intercal}Q^{\intercal}\varphi(V)^{\intercal})M(\exp(V)y^{0}+h\varphi(V)Q\overline{\nabla}U)\\
&=\frac{1}{2}y^{0\intercal}\exp(V)^{\intercal}M\exp(V)y^{0}+hy^{0\intercal}\exp(V)^{\intercal}M\varphi(V)Q\overline{\nabla}U+
\frac{h^{2}}{2}\overline{\nabla}U^{\intercal}Q^{\intercal}\varphi(V)^{\intercal}M\varphi(V)Q\overline{\nabla}U\\
&=\frac{1}{2}y^{0\intercal}\exp(V)^{\intercal}M\exp(V)y^{0}+y^{0\intercal}\exp(V)^{\intercal}M\varphi(V)V\widetilde{\nabla}U+
\frac{1}{2}\widetilde{\nabla}U^{\intercal}V^{\intercal}\varphi(V)^{\intercal}M\varphi(V)V\widetilde{\nabla}U\quad(\text{using }V=hQM)\\
&=\frac{1}{2}y^{0\intercal}\exp(V)^{\intercal}M\exp(V)y^{0}+y^{0\intercal}\exp(V)^{\intercal}M(\exp(V)-I)\widetilde{\nabla}U\\
&+\frac{1}{2}\widetilde{\nabla}U^{\intercal}(\exp(V)^{\intercal}-I)M(\exp(V)-I)\widetilde{\nabla}U\quad(\text{using }\varphi(V)V=\exp(V)-I)\\
&=\frac{1}{2}y^{0\intercal}\exp(V)^{\intercal}M\exp(V)y^{0}+y^{0\intercal}(\exp(V)^{\intercal}M\exp(V)-\exp(V)^{\intercal}M)\widetilde{\nabla}U\\
&+\frac{1}{2}\widetilde{\nabla}U^{\intercal}(\exp(V)^{\intercal}M\exp(V)-\exp(V)^{\intercal}M-M\exp(V)+M)\widetilde{\nabla}U.\\
\end{aligned}
\end{equation}
On the other hand, it follows from the property of the discrete
gradient \eqref{COND} that
\begin{equation}\label{II}
\begin{aligned}
&U(y^{1})-U(y^{0})\\
&=(y^{1\intercal}-y^{0\intercal})\overline{\nabla}U(y^{1},y^{0})\\
&=y^{0\intercal}(\exp(V)^{\intercal}-I)\overline{\nabla}U
+h\overline{\nabla}U^{\intercal}Q^{\intercal}\varphi(V)^{\intercal}\overline{\nabla}U\\
&=y^{0\intercal}(\exp(V)^{\intercal}M-M)\widetilde{\nabla}U
+\widetilde{\nabla}U^{\intercal}V^{\intercal}\varphi(V)^{\intercal}M\widetilde{\nabla}U\\
&=y^{0\intercal}(\exp(V)^{\intercal}M-M)\widetilde{\nabla}U
+\widetilde{\nabla}U^{\intercal}(\exp(V)^{\intercal}M-M)\widetilde{\nabla}U.\\
\end{aligned}
\end{equation}

Combining \eqref{I}, \eqref{II} and collecting  terms by types
`$y^{0\intercal}*y^{0}$', `$y^{0\intercal}*\widetilde{\nabla}U$',
`$\widetilde{\nabla}U^{\intercal}*\widetilde{\nabla}U$' leads to
\begin{equation}\label{III}
\begin{aligned}
&H(y^{1})-H(y^{0})\\
&=\frac{1}{2}y^{1\intercal}My^{1}-\frac{1}{2}y^{0\intercal}My^{0}+U(y^{1})-U(y^{0})\\
&=\frac{1}{2}y^{0\intercal}(\exp(V)^{\intercal}M\exp(V)-M)y^{0}+y^{0\intercal}(\exp(V)^{\intercal}M\exp(V)-M)\widetilde{\nabla}U\\
&+\frac{1}{2}\widetilde{\nabla}U^{\intercal}(\exp(V)^{\intercal}M\exp(V)-M)\widetilde{\nabla}U
+\frac{1}{2}\widetilde{\nabla}U^{\intercal}(\exp(V)^{\intercal}M-M\exp(V))\widetilde{\nabla}U\\
&=\frac{1}{2}(y^{0}+\widetilde{\nabla}U)^{\intercal}B(y^{0}+\widetilde{\nabla}U)+\frac{1}{2}\widetilde{\nabla}U^{\intercal}C\widetilde{\nabla}U=0,\\
\end{aligned}
\end{equation}
where $B=\exp(V)^{\intercal}M\exp(V)-M$ and
$C=\exp(V)^{\intercal}M-M\exp(V)$. The last step is from the
skew-symmetry of the matrix $B$ (according to Lemma \ref{lem2.1})
and $C$.

If $M$ is singular, it is easy to find a series of symmetric and
nonsingular matrices $\{M_{\varepsilon}\}$ which converge to $M$
when $\varepsilon\to 0.$ Thus, according to the result
stated above, it still holds that
\begin{equation}\label{perturbed}
H_{\varepsilon}(y_{\varepsilon}^{1})=H_{\varepsilon}(y^{0})
\end{equation} for all $\varepsilon,$  where
$H_{\varepsilon}(y)=\frac{1}{2}y^{\intercal}M_{\varepsilon}y+U(y)$
is the first integral of {the perturbed problem}
$$\dot{y}=Q(M_{\varepsilon}y+\nabla U(y)),\quad {y(t_{0})=y^{0}},$$
and
\begin{equation*}
y_{\varepsilon}^{1}=\exp(V_{\varepsilon})y^{0}+h\varphi(V_{\varepsilon})Q\overline{\nabla}U(y_{\varepsilon}^{1},y^{0}),\quad V_{\varepsilon}=hQM_{\varepsilon}.
\end{equation*}
Therefore, when $\varepsilon\to0$, $y^{1}_{\varepsilon}\to y^{1}$ and \eqref{perturbed} leads to
$$H(y^{1})=H(y^{0}).$$ This completes the proof.
\end{proof}\qed

Moreover, the scheme \eqref{AAVF} can also model the decay of the Lyapunov
function once $Q\leq0$ in \eqref{LNODE}. The next theorem shows this point.

\begin{mytheo}\label{Lya}
If $Q$ is negative semi-definite (not necessary to be symmetric),
then the scheme \eqref{AAVF} preserves the Lyapunov function $H$ in
\eqref{LNODE}:
\begin{equation*}
H(y^{1})\leq H(y^{0}),
\end{equation*}
where $H(y)=\frac{1}{2}y^{\intercal}My+U(y)$.
\end{mytheo}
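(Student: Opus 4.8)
The plan is to reuse, essentially verbatim, the algebraic computation already carried out in the proof of Theorem \ref{Ham}, since nothing in the derivation of identity \eqref{III} actually exploited the skew-symmetry of $Q$. First I would again assume $M$ nonsingular, set $\widetilde{\nabla}U = M^{-1}\overline{\nabla}U$, and substitute the scheme \eqref{AAVF} into $\tfrac{1}{2}y^{1\intercal}My^1$ and into the discrete-gradient expression for $U(y^1)-U(y^0)$. The point I would emphasize is that every manipulation producing \eqref{I} and \eqref{II} relies only on the relations $V=hQM$, $\varphi(V)V=\exp(V)-I$, and the symmetry of $M$; none of them uses any special structure of $Q$. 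Collecting terms by the same types as before therefore yields
\[
H(y^1)-H(y^0)=\tfrac{1}{2}(y^0+\widetilde{\nabla}U)^{\intercal}B(y^0+\widetilde{\nabla}U)+\tfrac{1}{2}\widetilde{\nabla}U^{\intercal}C\widetilde{\nabla}U,
\]
with $B=\exp(V)^{\intercal}M\exp(V)-M$ and $C=\exp(V)^{\intercal}M-M\exp(V)$ exactly as in \eqref{III}.

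Next I would dispose of the $C$ term. Because $M$ is symmetric, $C^{\intercal}=M\exp(V)-\exp(V)^{\intercal}M=-C$, so $C$ is skew-symmetric irrespective of the nature of $Q$; hence the quadratic form $\widetilde{\nabla}U^{\intercal}C\widetilde{\nabla}U$ vanishes. The only place the hypothesis on $Q$ enters is the $B$ term, and here I would invoke Lemma \ref{lem2.1}: since $Q\leq0$, we have $B\leq0$, so $\tfrac{1}{2}(y^0+\widetilde{\nabla}U)^{\intercal}B(y^0+\widetilde{\nabla}U)\leq0$. Adding the two contributions gives $H(y^1)-H(y^0)\leq0$, which is the claim for nonsingular $M$.

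Finally, I would extend to singular $M$ by the same perturbation device used at the end of Theorem \ref{Ham}: take symmetric nonsingular $M_{\varepsilon}\to M$, apply the nonsingular case (noting that $Q\leq0$ is untouched by the perturbation, so Lemma \ref{lem2.1} still furnishes $B_{\varepsilon}\leq0$) to obtain $H_{\varepsilon}(y_{\varepsilon}^1)\leq H_{\varepsilon}(y^0)$, and let $\varepsilon\to0$. The only difference from the conservative case is that we now pass to the limit in an inequality rather than an equality, but since $\{x:x\leq0\}$ is closed the inequality is preserved.

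I expect the only genuine point to verify — and the sole difference from Theorem \ref{Ham} — is that the $C$ term still drops out by skew-symmetry, so that the sign of $H(y^1)-H(y^0)$ is governed entirely by the sign-definite matrix $B$, together with confirming that Lemma \ref{lem2.1} supplies $B\leq0$ in the dissipative regime. Neither step presents a real obstacle, so this theorem is in effect a corollary of the computation already performed for the conservative case.
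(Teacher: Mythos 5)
Your proposal is correct and follows essentially the same route as the paper's own proof: reuse the identity \eqref{III} (noting its derivation never used skew-symmetry of $Q$), kill the $C$-term by the skew-symmetry of $C=\exp(V)^{\intercal}M-M\exp(V)$, invoke Lemma \ref{lem2.1} to get $B\leq 0$ when $Q\leq 0$, and handle singular $M$ by the same perturbation argument with the equalities replaced by inequalities that survive the limit $\varepsilon\to 0$. If anything, you are slightly more explicit than the paper, which states the reduced identity $H(y^{1})-H(y^{0})=\frac{1}{2}(y^{0}+\widetilde{\nabla}U)^{\intercal}B(y^{0}+\widetilde{\nabla}U)$ without spelling out that the $C$-term drops for reasons independent of $Q$.
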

\begin{proof} If $M$ is nonsingular, the equation in \eqref{III}
\begin{equation*}
H(y^{1})-H(y^{0})=\frac{1}{2}(y^{0}+\widetilde{\nabla}U)^{\intercal}B(y^{0}+\widetilde{\nabla}U)
\end{equation*}
still holds, since the derivation does not depend on the
skew-symmetry of $Q$. By Lemma \ref{lem2.1}, $B$ is negative
semi-definite. Thus $H(y^{1})\leq H(y^{0}).$ In the case that $M$ is
singular, this theorem can be easily proved by replacing the
equalities
$$H_{\varepsilon}(y_{\varepsilon}^{1})=H_{\varepsilon}(y^{0}),\quad H(y^{1})=H(y^{0})$$ in
the proof of Theorem \ref{Ham} with the inequalities
$$H_{\varepsilon}(y_{\varepsilon}^{1})\leq H_{\varepsilon}(y^{0}),\quad H(y^{1})\leq H(y^{0}).$$
\end{proof} We here skip the details.\qed

In this paper, we choose the average vector field (AVF) as the
discrete gradient in \eqref{AAVF}. The corresponding scheme for
\eqref{LNODE} now is {\begin{equation}\label{EAVF}
\begin{aligned}
y^{1}&=\exp(V)y^{0}+h\varphi(V)Q\int_{0}^{1}\nabla
U((1-\tau)y^{0}+\tau y^{1})d\tau,\end{aligned}
\end{equation}
}where $V=hQM$ and $y^{1}\approx y(t_{0}+h)$. \eqref{EAVF} is
{called an \emph{exponential AVF integrator} and denoted by EAVF.}

\section{Properties of EAVF}\label{CLA}
In this section, we present two properties of EAVF as
follows.
\begin{mytheo}\label{symm}
The EAVF integrator \eqref{EAVF} is symmetric.
\end{mytheo}

\begin{proof}
Exchanging $y^{0}\leftrightarrow y^{1}$ and replacing $h$ by $-h$ in
\eqref{EAVF}, we obtain
\begin{equation}\label{X}
y^{0}=\exp(-V)y^{1}-h\varphi(-V)Q\int_{0}^{1}\nabla U((1-\tau)y^{1}+\tau y^{0})d\tau .
\end{equation}
Rewrite \eqref{X} as:
\begin{equation}\label{XX}
y^{1}=\exp(V)y^{0}+h\exp(V)\varphi(-V)Q\int_{0}^{1}\nabla U((1-\tau)y^{0}+\tau y^{1})d\tau .
\end{equation}
Since $\exp(V)\varphi(-V)=\varphi(V)$, \eqref{XX} is the same as
\eqref{EAVF} exactly, which means that EAVF is symmetric.
\end{proof}\qed

It should be noted that the scheme \eqref{EAVF} is implicit in
general, and thus iteration solutions are required. Next, we discuss
the convergence of the fixed-point iteration for the EAVF
integrator.

\begin{mytheo}\label{iter}
Suppose that $||\varphi(V)||_{2}\leq C$, $\nabla U(u)$ satisfies the Lipschitz condition, i. e.
there exists a constant L such that
$$||\nabla U(v)-\nabla U(w)||_{2}\leq L||v-w||_{2}.$$
If
\begin{equation}\label{cond}
0<h\leq\hat h<\frac{2}{CL||Q||_{2}}
\end{equation}
then the iteration $$\Psi :
z\mapsto\exp(V)y^{n}+h\varphi(V)Q\int_{0}^{1}\nabla
U((1-\tau)y^{n}+\tau z)d\tau$$ for the EAVF integrator \eqref{EAVF}
is convergent.
\end{mytheo}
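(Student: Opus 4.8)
The plan is to show that the map $\Psi$ is a contraction on $\mathbb{R}^d$ (with the Euclidean norm) under the stepsize restriction \eqref{cond}, and then invoke the Banach fixed-point theorem to conclude convergence. The linear term $\exp(V)y^n$ is constant in the iteration variable $z$, so it disappears when we estimate the difference $\Psi(z_1)-\Psi(z_2)$. Concretely, for two iterates $z_1, z_2$ I would write
\begin{equation*}
\Psi(z_1)-\Psi(z_2)=h\varphi(V)Q\int_{0}^{1}\bigl(\nabla U((1-\tau)y^{n}+\tau z_1)-\nabla U((1-\tau)y^{n}+\tau z_2)\bigr)d\tau,
\end{equation*}
so that the entire dependence on $z_1,z_2$ is funneled through the difference of gradients inside the integral.

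The key estimate is then a chain of standard inequalities. First I would apply submultiplicativity of the spectral norm to pull out the factors $h$, $\|\varphi(V)\|_2\le C$, and $\|Q\|_2$. Next, moving the norm inside the integral and using the Lipschitz bound on $\nabla U$ gives a pointwise (in $\tau$) estimate of the integrand by $L\|\tau(z_1-z_2)\|_2=L\tau\|z_1-z_2\|_2$. Integrating the factor $\tau$ over $[0,1]$ produces the constant $\tfrac{1}{2}$, so the whole difference is bounded by
\begin{equation*}
\|\Psi(z_1)-\Psi(z_2)\|_2\le \tfrac{1}{2}hCL\|Q\|_2\,\|z_1-z_2\|_2.
\end{equation*}
Under the hypothesis $h\le\hat h<2/(CL\|Q\|_2)$, the contraction factor $\tfrac{1}{2}hCL\|Q\|_2$ is strictly less than $1$, so $\Psi$ is a contraction and the fixed-point iteration converges to the unique fixed point, which is exactly the EAVF update $y^{1}$.

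I do not anticipate a genuine obstacle here, since the argument is a textbook contraction-mapping estimate; the only points requiring care are purely technical. One is justifying that the norm may be moved inside the vector-valued integral (the triangle inequality for integrals), and the other is handling the $\varphi(V)$ factor correctly — the hypothesis bounds $\|\varphi(V)\|_2$ directly, so no separate analysis of the matrix function is needed. I would also note in passing that $\Psi$ maps $\mathbb{R}^d$ into itself and that $\mathbb{R}^d$ is complete, so the Banach fixed-point theorem applies without qualification and yields both existence and uniqueness of the solution of \eqref{EAVF}.
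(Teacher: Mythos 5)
Your proposal is correct and follows essentially the same route as the paper: both bound $\Psi(z_1)-\Psi(z_2)$ by submultiplicativity, the hypothesis $\|\varphi(V)\|_2\le C$, and the Lipschitz condition, with the factor $\tfrac{1}{2}$ coming from integrating $\tau$ over $[0,1]$, yielding the contraction constant $\tfrac{h}{2}CL\|Q\|_2<1$ and an appeal to the contraction mapping theorem. The only difference is that you spell out the intermediate steps (moving the norm inside the integral, the origin of the $\tfrac12$) that the paper's proof compresses into a single chain of inequalities.
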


\begin{proof} Since
\begin{equation*}
\begin{aligned}
&||\Psi(z_{1})-\Psi(z_{2})||_{2}\\
&=||h\varphi(V)Q\int_{0}^{1}(\nabla U((1-\tau)y^{n}+\tau z_{1})-\nabla U((1-\tau)y^{n}+\tau z_{2}))d\tau||_{2}\\
&\leq \frac{h}{2}CL||Q||_{2}||z_{1}-z_{2}||_{2}\leq\frac{\hat h}{2}CL||Q||_{2}||z_{1}-z_{2}||_{2}=\rho||z_{1}-z_{2}||_{2},\\
\end{aligned}
\end{equation*}
where $$\rho=\frac{\hat h}{2}CL||Q||_{2}<1,$$ the iteration $\Psi$
converges by \eqref{cond} and Contraction Mapping Theorem.
\end{proof}\qed
\begin{rem}\label{exam}
We note two special and important cases in practical applications.
If $QM$ is skew-symmetric or symmetric negative semi-definite, then
the spectrum of $V$ lies in the left {half-plane.} Since $QM$ is
unitarily diagonalizable and $|\varphi(z)|\leq1$ for any {$z$
satisfying $real(z)\leq0$, we have} $||\varphi(V)||_{2}\leq1$.
\end{rem}

In many cases, the matrix $M$ has extremely large norm (e.g., $M$ incorporates high frequency components in
oscillatory problems or $M$ is the differential matrix in semi-discrete PDEs), thus,
Theorem \ref{iter} ensures the possibility of choosing relatively
large stepsize regardless of $M$.

In practice, the integral in \eqref{EAVF} usually cannot be easily
calculated. Therefore, we can evaluate it using the $s$-point
Gauss-Legendre (GL$s$) formula $(b_{i},c_{i})_{i=1}^{s}$:
\begin{equation*}
\int_{0}^{1}\nabla U((1-\tau)y^{0}+\tau
y^{1})d\tau\approx\sum_{i=1}^{s}b_{i}\nabla
U((1-c_{i})y^{0}+c_{i}y^{1})).
\end{equation*}
The corresponding scheme is denoted by EAVFGL$s$. Since the
$s$-point GL quadrature formula is symmetric, EAVFGL$s$ is also
symmetric. According to $\sum_{i=1}^{s}b_{i}c_{i}=1/2$, the
corresponding iteration for EAVFGL$s$ is convergent
provided \eqref{cond} holds.

\section{Problems suitable for the EAVF}\label{LECNS}
\subsection{Highly oscillatory nonseparable Hamiltonian system}
Consider the Hamiltonian
$$H(p,q)=\frac{1}{2}p_{1}^{\intercal}M_{1}^{-1}p_{1}+\frac{1}{2\varepsilon^{2}}q_{1}^{\intercal}A_{1}q_{1}+S(p,q),$$
where $$p=\left(\begin{array}{c}p_{0}\\p_{1}\end{array}\right),\quad
q=\left(\begin{array}{c}q_{0}\\q_{1}\end{array}\right)$$ {are both
$d$-length vectors,} $M_{1}, A_{1}$ are symmetric positive definite
matrices, and $0<\varepsilon\ll 1$. This Hamiltonian governs
oscillatory mechanical systems in 2 or 3 space dimensions such as
the stiff spring pendulum and the dynamics of the multi-atomic
molecule (see, e.g. \cite{Cohen2006,Cohen2006b}). After an
appropriate canonical transformation (see, e.g. \cite{Hairer}), this
Hamiltonian becomes:
\begin{equation}\label{Hamil}
H(p,q)=\frac{1}{2}\sum_{j=1}^{l}(p_{1,j}^{2}+\frac{\lambda_{j}^{2}}{\varepsilon^{2}}q_{1,j}^{2})+S(p,q),
\end{equation}
where $p_{1}=(p_{1,1},\ldots,p_{1,l})^{\intercal},
q_{1}=(q_{1,1},\ldots,q_{1,l})^{\intercal}$. The corresponding
equation is given by
\begin{equation}
\left\{\begin{aligned}\label{NSP}
&\dot{p_{0}}=-\nabla_{q_{0}}S(p,q),\\
&\dot{p_{1}}=-\omega^{2\intercal}q_{1}-\nabla_{q_{1}}S(p,q),\\
&\dot{q_{0}}=p_{0}+(\nabla_{p_{0}}S(p,q)-p_{0}),\\
&\dot{q_{1}}=p_{1}+\nabla_{p_{1}}S(p,q),\\
\end{aligned}\right.
\end{equation}
where $\omega=(\omega_{1},\ldots,\omega_{l})^{\intercal},
\omega_{j}=\lambda_{j}/\varepsilon$ for $j=1,\ldots,l$. \eqref{NSP}
is of the form \eqref{LNODE} : \begin{equation*}
y=\left(\begin{array}{c}p\\q\end{array}\right),\quad
Q=\left(\begin{array}{cc}O&-I_{d\times d}\\I_{d\times
d}&O\end{array}\right),\quad M=\left(\begin{array}{cc}I_{d\times
d}&O\\ O&\Omega_{d\times d}\end{array}\right),
\end{equation*}
and
\begin{equation*}
U(p,q)=S(p,q)-\frac{1}{2}p_{0}^{\intercal}p_{0},\quad\Omega=diag(0,\ldots,0,\omega_{1}^{2},\ldots,\omega_{l}^{2}).
\end{equation*}
Since $q_{11},\ldots,q_{1l}$ and $p_{11},\ldots,p_{1l}$ are fast variables, it is favorable to integrate the linear part
of them exactly by the scheme \eqref{EAVF}. Note that
\begin{equation*}
\varphi(V)=\left(\begin{array}{cc}sinc(h\Omega^{\frac{1}{2}})&h^{-1}g_{2}(h\Omega^{\frac{1}{2}})\\hg_{1}(h\Omega^{\frac{1}{2}})&sinc(h\Omega^{\frac{1}{2}})\end{array}\right),
\end{equation*}
where $sinc(z)=\sin(z)/z, g_{1}(z)=(1-\cos(z))/z^{2},
g_{2}(z)=\cos(z)-1$. {Unfortunately, the block
$h^{-1}g_{2}(h\Omega^{\frac{1}{2}})$ is not uniformly bounded. In
the first experiment, the iteration still works well, perhaps due to
the small Lipshitz constant of $\nabla S$.}

\subsection{Second-order (damped) highly oscillatory system}
Consider
\begin{equation}\label{2th}
\ddot{q}-N\dot{q}+\Omega q=-\nabla U_{1}(q),
\end{equation}
where {$q$ is a $d$-length vector variable, $U_{1}:
\mathbb{R}^{d}\to\mathbb{R}$ is a differential function}, $N$ is a
symmetric negative semi-definite matrix, $\Omega$ is a symmetric
positive semi-definite matrix, $||\Omega||$ or $||N||\gg1$.
\eqref{2th} stands for highly oscillatory problems such as the
dissipative molecular dynamics, the (damped) Duffing and
semi-discrete nonlinear wave equations. By introducing $p=\dot{q},$
we write \eqref{2th} as a first-order system of ODEs :
\begin{equation}\label{1th}
\left(\begin{array}{c}\dot{q}\\ \dot{p}\end{array}\right)=\left(\begin{array}{cc}O&I\\-\Omega&N\end{array}\right)
\left(\begin{array}{c}q\\p\end{array}\right)+\left(\begin{array}{c}0\\-\nabla U_{1}(q)\end{array}\right),
\end{equation}
which falls into the {class \eqref{LNODE},} where
\begin{equation*}
y=\left(\begin{array}{c}q\\p\end{array}\right), Q=\left(\begin{array}{cc}O&I\\-I&N\end{array}\right), M=\left(\begin{array}{cc}\Omega&O\\O&I\end{array}\right),
U(y)=\left(\begin{array}{c}U_{1}(q)\\O\end{array}\right).
\end{equation*}

{Clearly,} $Q\leq0$ and \eqref{1th} is a dissipative system with the
Lyapunov function
$H=\frac{1}{2}p^{\intercal}p+\frac{1}{2}q^{\intercal}\Omega
q+U_{1}(q)$. In the particular case $N=0,$ \eqref{1th} becomes a
conservative Hamiltonian system. {Let
$$A=QM=\left(\begin{array}{cc}O&I\\-\Omega&N\end{array}\right).$$}
Applying the EAVF integrator \eqref{EAVF} to the equation
\eqref{1th} yields the scheme:
\begin{equation}\label{EAVF1}
\left\{\begin{aligned}
&q^{1}=\exp_{11}q^{0}+\exp_{12}p^{0}-h\varphi_{12}\int_{0}^{1}\nabla U_{1}((1-\tau)q^{0}+\tau q^{1})d\tau,\\
&p^{1}=\exp_{21}q^{0}+\exp_{22}p^{0}-h\varphi_{22}\int_{0}^{1}\nabla U_{1}((1-\tau)q^{0}+\tau q^{1})d\tau,\\
\end{aligned}\right.
\end{equation}
where $\exp(hA)$ and $\varphi(hA)$ are partitioned into
\begin{equation*}
\left(\begin{array}{cc}\exp_{11}&\exp_{12}\\ \exp_{21}&\exp_{22}\end{array}\right) \text{  and  } \left(\begin{array}{cc}\varphi_{11}&\varphi_{12}\\ \varphi_{21}&\varphi_{22}\end{array}\right),
\end{equation*}
respectively.

It should be noted that only the first equation in the scheme
\eqref{EAVF1} need to be solved by iterations. From the proof
procedure of Theorem \ref{iter}, one can find that the convergence
of the fixed-point iteration for \eqref{EAVF1} is irrelevant to
$||A||$ provided $\varphi_{12}$ is uniformly bounded.
\begin{mytheo}\label{iter2}
{Assume that $\Omega$ commutes with $N$, $||\nabla U_{1}(v)-\nabla
U_{1}(w)||_{2}\leq L||v-w||_{2}$, then the iteration
$$\Phi : z\mapsto\exp_{11}q^{0}+\exp_{12}p^{0}-h\varphi_{12}\int_{0}^{1}\nabla U_{1}((1-\tau)q^{0}+\tau z)d\tau$$
for the scheme \eqref{EAVF1} is convergent provided
$$0<h\leq\hat h<\frac{2}{L^{\frac{1}{2}}}.$$}
\end{mytheo}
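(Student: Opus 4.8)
The plan is to follow the contraction-mapping strategy of Theorem \ref{iter}: estimate the Lipschitz constant of $\Phi$ and force it below $1$. First I would subtract two iterates and use the Lipschitz bound on $\nabla U_{1}$ together with $\int_{0}^{1}\tau\,d\tau=\tfrac12$ to obtain
\begin{equation*}
\|\Phi(z_{1})-\Phi(z_{2})\|_{2}\le h\|\varphi_{12}\|_{2}\int_{0}^{1}L\,\tau\,d\tau\,\|z_{1}-z_{2}\|_{2}=\tfrac{h}{2}L\|\varphi_{12}\|_{2}\,\|z_{1}-z_{2}\|_{2},
\end{equation*}
so that everything reduces to the single estimate $\|\varphi_{12}\|_{2}\le h/2$. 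Once this is in hand the contraction factor is $\rho=\tfrac{h^{2}L}{4}$, which is strictly less than $1$ exactly under the stated bound $h<2/L^{\frac12}$, and the Contraction Mapping Theorem then closes the argument.

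The core of the proof is therefore the bound on the off-diagonal block $\varphi_{12}$ of $\varphi(hA)$ with $A=\left(\begin{array}{cc}O&I\\-\Omega&N\end{array}\right)$, and this is exactly where the hypothesis that $\Omega$ commutes with $N$ enters. Since $\Omega$ and $N$ are symmetric and commute, I would diagonalize them simultaneously by a single orthogonal matrix; after the permutation grouping the $j$-th components of $q$ and $p$, the matrix $A$ becomes block diagonal with $2\times2$ blocks $A_{j}=\left(\begin{array}{cc}0&1\\-\omega_{j}^{2}&\nu_{j}\end{array}\right)$, where $\omega_{j}^{2}\ge0$ are the eigenvalues of $\Omega$ and $\nu_{j}\le0$ those of $N$. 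Because $\varphi$ acts on this block-diagonal form blockwise, $\varphi(hA)$ is block diagonal with blocks $\varphi(hA_{j})$, and hence $\|\varphi_{12}\|_{2}=\max_{j}\bigl|[\varphi(hA_{j})]_{12}\bigr|$; it thus suffices to bound each scalar $(1,2)$ entry by $h/2$.

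For a fixed block set $\mu=-\nu_{j}\ge0$ and $\omega=\omega_{j}$. Using $\varphi(hA_{j})=\int_{0}^{1}\exp(\theta hA_{j})\,d\theta$ and the substitution $s=\theta h$, the $(1,2)$ entry equals $[\varphi(hA_{j})]_{12}=\tfrac1h\int_{0}^{h}b(s)\,ds$, where $(b,c)^{\intercal}=\exp(sA_{j})e_{2}$ and $b$ solves the scalar companion problem $b''+\mu b'+\omega^{2}b=0$, $b(0)=0$, $b'(0)=1$. The key elementary estimate I would then prove is the pointwise bound $|b(s)|\le s$ for all $s\ge0$: in the underdamped case $b(s)=\beta^{-1}e^{-\mu s/2}\sin(\beta s)$ with $\beta=\sqrt{\omega^{2}-\mu^{2}/4}$, so $|b(s)|\le\beta^{-1}|\sin(\beta s)|\le\beta^{-1}\cdot\beta s=s$; the critically and overdamped cases give $b(s)=s\,e^{\eta s}$ for some $\eta\le0$ (directly for $\mu=2\omega$, and by the mean value theorem applied to $\lambda\mapsto e^{\lambda s}$ in general), whence $0\le b(s)\le s$. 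Consequently $\bigl|[\varphi(hA_{j})]_{12}\bigr|\le\tfrac1h\int_{0}^{h}s\,ds=h/2$ for every block, which yields $\|\varphi_{12}\|_{2}\le h/2$ and completes the chain.

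The step I expect to be the real obstacle is the reduction to $2\times2$ blocks: without the commutativity hypothesis $A$ need not decouple, and $\varphi_{12}$ would reflect the genuinely non-normal interaction of $\Omega$ and $N$, for which the clean scalar bound $|b(s)|\le s$ is unavailable. Granting commutativity, the only remaining point requiring care is proving $|b(s)|\le s$ uniformly across the damping regimes, where one must control the oscillatory, sign-changing case; the two inequalities $e^{-\mu s/2}\le1$ and $|\sin x|\le|x|$ are precisely what make the estimate go through.
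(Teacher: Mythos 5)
Your proposal is correct and follows essentially the same route as the paper: simultaneous orthogonal diagonalization of the commuting pair $\Omega$, $N$, reduction of $\|\varphi_{12}\|_{2}$ to scalar estimates indexed by eigenvalue pairs, the pointwise bound $|b(s)|\le s$ (which is exactly the paper's estimate \eqref{bound} on $\|\exp_{12}^{s}\|_{2}$), integration over the step to get $\|\varphi_{12}\|_{2}\le h/2$, and the contraction factor $\rho=h^{2}L/4<1$ under $h<2/L^{1/2}$. The only cosmetic difference is that you derive the scalar bound from the damped-oscillator solution formulas (with a mean value theorem shortcut in the overdamped case), whereas the paper writes the explicit $\sinh$-expression for $\exp_{12}^{h}$ and bounds the function $g(\lambda,\sigma)$ by $1$ --- the same case analysis on the sign of the discriminant $\sigma^{2}-4\lambda$.
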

\begin{proof} The crucial point here is to find a uniform upper bound of $||\varphi_{12}||$.
Since $\Omega$ commutes with $N$, they can be simultaneously diagonalized:
$$\Omega=F^{\intercal}\Lambda F,\quad N=F^{\intercal}\Sigma F,$$
where $F$ is an orthogonal matrix,
$\Lambda=diag(\lambda_{1},\ldots,\lambda_{d}),
\Sigma=diag(\sigma_{1},\ldots,\sigma_{d})$ and $\lambda_{i}\geq0,
\sigma_{i}\leq0$ for $i=1,2,\ldots,d$. It now
follows from
\begin{equation*}
\begin{aligned}
&A=\left(\begin{array}{cc}F^{\intercal}&O\\O&F^{\intercal}\end{array}\right)\left(\begin{array}{cc}O&I\\-\Lambda&\Sigma\end{array}\right)
\left(\begin{array}{cc}F&O\\O&F\end{array}\right)\end{aligned}\end{equation*}
that
\begin{equation*}
\begin{aligned}
&\exp(hA)=\left(\begin{array}{cc}F^{\intercal}&O\\O&F^{\intercal}\end{array}\right)\exp\left\{\left(\begin{array}{cc}O&hI\\-h\Lambda&h\Sigma\end{array}\right)\right\}
\left(\begin{array}{cc}F&O\\O&F\end{array}\right).\\
\end{aligned}
\end{equation*}
To show that $\exp_{12}$ and $\varphi_{12}$ depends on $h$, we
denote them by $\exp_{12}^{h}$ and $\varphi_{12}^{h}$, respectively.
After some calculations, we have
\begin{equation*}
\exp_{12}^{h}=F^{\intercal}\frac{2\sinh(h(\Sigma^{2}-4\Lambda)^{\frac{1}{2}}/2)}{(\Sigma^{2}-4\Lambda)^{\frac{1}{2}}}\exp\left(\frac{h\Sigma}{2}\right)F.
\end{equation*}
 Then we have
\begin{equation}\label{exp}
||\exp_{12}^{h}||_{2}=||\frac{2\sinh(h(\Sigma^{2}-4\Lambda)^{\frac{1}{2}}/2)}{(\Sigma^{2}-4\Lambda)^{\frac{1}{2}}}\exp\left(\frac{h\Sigma}{2}\right)||_{2}
=h\max_{i}|\frac{\sinh((h^{2}\sigma_{i}^{2}/4-\lambda_{i})^{\frac{1}{2}})}{(h^{2}\sigma_{i}^{2}/4-\lambda_{i})^{\frac{1}{2}}}\exp\left(\frac{h\sigma_{i}}{2}\right)|.
\end{equation}
In order to estimate $||\exp_{12}^{h}||_{2}$, the bound of the
function
\begin{equation*}
g(\lambda,\sigma)=\frac{\sinh((\sigma^{2}-4\lambda)^{\frac{1}{2}})}{(\sigma^{2}-4\lambda)^{\frac{1}{2}}}\exp\left(\sigma\right),
\end{equation*}
should be considered for $\sigma\leq0,\lambda\geq0$. If
$\sigma^{2}-4\lambda<0$, we set
$(\sigma^{2}-4\lambda)^{\frac{1}{2}}=ia,$ where $i$ is the imaginary
unit and $a$ is a real number. Then we have
\begin{equation*}
|g|=|\frac{\sin(a)}{a}\exp\left(\sigma\right)|\leq|\frac{\sin(a)}{a}|\leq1.
\end{equation*}
If $\sigma^{2}-4\lambda\geq0$, then $a=(\sigma^{2}-4\lambda)^{\frac{1}{2}}\leq-\sigma$,
\begin{equation*}
|g|=|\frac{\sinh(a)}{a}\exp\left(\sigma\right)|\leq
|\frac{\sinh(a)}{a}\exp(-a)|=|\frac{1-\exp(-2a)}{2a}|\leq1.
\end{equation*}
Thus
\begin{equation}\label{g}
|g(\lambda,\sigma)|\leq1\quad\text{for }\sigma\leq0,\lambda\geq0.
\end{equation}
It follows from \eqref{exp} and \eqref{g} that
\begin{equation}\label{bound}
||\exp_{12}^{h}||_{2}=h\max_{i}|g(\frac{h\sigma_{i}}{2},\lambda_{i})|\leq h.
\end{equation}
Therefore, using $\varphi(hA)=\int_{0}^{1}\exp((1-\xi)hA)d\xi$ and
\eqref{bound}, we obtain
$$||\varphi_{12}^{h}||_{2}=||\int_{0}^{1}\exp_{12}^{(1-\xi)h}d\xi||_{2}\leq\int_{0}^{1}||\exp_{12}^{(1-\xi)h}||_{2}d\xi
\leq\int_{0}^{1}(1-\xi)hd\xi=\frac{1}{2}h.$$

\noindent Since the rest of the proof is very similar to that of
Theorem \ref{iter}, we omit it here.
\end{proof}\qed

It can be observed that in the particular case that $N=0$, the
scheme \eqref{EAVF1} reduces to the AAVF integrator in
\cite{Wang2012}.

\subsection{Semi-discrete conservative and dissipative PDEs}
Many time-dependent {PDEs are of the form} :
\begin{equation}\label{TDPDE}
\partial_{t}y(x,t)=\mathcal{Q}\frac{\delta\mathcal{H}}{\delta y},
\end{equation}
where $y(\cdot,t)\in X$ for every $t\geq0$, $X$ is a {Hilbert} space
such as $\mathbf{L}^{2}(\Omega),
\mathbf{L}^{2}(\Omega)\times\mathbf{L}^{2}(\Omega), \ldots$,
$\Omega$ is a domain in $\mathbb{R}^{d}$, and $\mathcal{Q}$ is a
linear operator on $X$.
$\mathcal{H}[y]=\int_{\Omega}H(y,\partial_{\alpha}y)dx$ ($H$ is
smooth, $x=(x_{1},\ldots,x_{d}), dx=dx_{1}\ldots dx_{d}$ and
$\partial_{\alpha}y$ denotes the partial derivatives of $y$ with
respect to spatial variables $x_{i}, 1\leq i\leq d$). Under suitable
boundary condition (BC), the variational derivative
$\frac{\delta\mathcal{H}}{\delta y}$ is defined by :
$$\langle\frac{\delta\mathcal{H}}{\delta y},z\rangle=\frac{d}{d\varepsilon}|_{\varepsilon=0}\mathcal{H}[y+\varepsilon z]$$
for any smooth $z\in X$ satisfying the same BC, where
$\langle\cdot,\cdot\rangle$ is the inner product {of $X$.} If
$\mathcal{Q}$ is a skew or negative semi-definite operator with
respect to $\langle\cdot,\cdot\rangle$, then the equation
\eqref{TDPDE} is conservative (e.g., the nonlinear wave, nonlinear
Schr\"{o}dinger, Korteweg--de Vries and Maxwell equations) or
dissipative (e.g., the Allen--Cahn, Cahn--Hilliard, Ginzburg--Landau
and heat equations), i.e., $\mathcal{H}[y]$ is constant or
{monotonically decreasing} (see, e.g.
\cite{Celledoni2010,Furihata2012}). In general, after the spatial
discretization, \eqref{TDPDE} becomes a conservative or dissipative
system of ODEs in the form \eqref{LNODE}. Here we exemplify
conservative ones by the nonlinear Schr\"{o}dinger equation:
\begin{equation}\label{NLS}
iy_{t}+y_{xx}+V^{'}(|y|^{2})y=0
\end{equation}
under the periodic BC $y(0,t)=y(L,t)$, where $y=p+iq$ is {a
complex-valued function, $p,q$ are both real}, $i$ is the imaginary
unit. The equation \eqref{NLS} {is of the form \eqref{TDPDE} :
\begin{equation}\label{RNLS}
\partial_{t}\left(\begin{array}{c}p\\q\end{array}\right)=\left(\begin{array}{cc}0&-1\\1&0\end{array}\right)
\left(\begin{array}{c}p_{xx}+V^{'}(p^{2}+q^{2})p\\q_{xx}+V^{'}(p^{2}+q^{2})q\end{array}\right),
\end{equation}}
where $X=\mathbf{L}^{2}([0,L])\times\mathbf{L}^{2}([0,L]),
\mathcal{H}[y]=\frac{1}{2}\int_{0}^{L}(V(p^{2}+q^{2})-p_{x}^{2}-q_{x}^{2})dx$.
Assume that the spatial domain is equally partitioned into $N$
intervals: $0=x_{0}<x_{1}<\ldots<x_{N}=L$. Discretizing the spatial
derivatives of \eqref{RNLS} by the central difference arrives at
\begin{equation}\label{DNLS}
\left(\begin{array}{c}\dot{\tilde{p}}\\ \dot{\tilde{q}}\end{array}\right)=\left(\begin{array}{cc}O&-I\\I&O\end{array}\right)
\left(\begin{array}{c}D\tilde{p}+V^{'}(\tilde{p}^{2}+\tilde{q}^{2})\tilde{p}\\D\tilde{q}+V^{'}(\tilde{p}^{2}+\tilde{q}^{2})\tilde{q}\end{array}\right),
\end{equation}
{where
\begin{equation*}
D=\left(\begin{array}{ccccc}-2&1& & & \\1&-2&1& & \\&\ddots&\ddots&\ddots& \\ & &1&-2&1\\& & &1&-2\end{array}\right),
\end{equation*}
is an $N\times N$ symmetric differential matrix,}
$\tilde{p}=(p_{0},\ldots,p_{N-1})^{\intercal},
\tilde{q}=(q_{0},\ldots,q_{N-1})^{\intercal}, p_{i}(t)\approx
p(x_{i},t)$ and $q_{i}(t)\approx q(x_{i},t)$ for $i=0,\ldots,N-1$.

An example of dissipative PDEs is the Allen-Cahn equation:
\begin{equation}\label{AC}
y_{t}=dy_{xx}+y-y^{3},\quad d\geq0
\end{equation}
under the the Neumann BC $y_{x}(0,t)=y_{x}(L,t)$.
$X=\mathbf{L}^{2}([0,L]), \mathcal{Q}=-1,
\mathcal{H}[y]=\int(\frac{1}{2}dy_{x}^{2}-\frac{1}{2}y^{2}+\frac{1}{4}y^{4})dx$.
{The spatial grids are chosen in the same way as NLS.} Discretizing
the spatial derivative by the central difference, we obtain
\begin{equation}\label{DAC}
\dot{\tilde{y}}=d\hat{D}\tilde{y}+\tilde{y}-\tilde{y}^{3},
\end{equation}
{where
\begin{equation*}
\hat{D}=\left(\begin{array}{ccccc}-1&1& & & \\1&-2&1& & \\&\ddots&\ddots&\ddots& \\ & &1&-2&1\\& & &1&-1\end{array}\right),
\end{equation*}
 is the $(N-1)\times(N-1)$ symmetric differential matrix,}
$\tilde{y}=(y_{1},\ldots,y_{N-1})^{\intercal}, y_{i}(t)\approx
y(x_{i},t)$.

Both the semi-discrete NLS equation \eqref{DNLS} and AC equation
\eqref{DAC} are of the form \eqref{LNODE}. For the NLS and  the AC
equations, we have
\begin{equation*}
Q=\left(\begin{array}{cc}O&-I\\I&O\end{array}\right),\quad
M=\left(\begin{array}{cc}D&O\\O&D\end{array}\right),\quad
U=\frac{1}{2}\sum_{i=0}^{N-1}V(p_{i}^{2}+q_{i}^{2}),
\end{equation*}
and
\begin{equation*}
Q=-I,\quad M=-d\hat{D},\quad U=\sum_{i=1}^{N-1}(-\frac{1}{2}y_{i}^{2}+\frac{1}{4}y_{i}^{4}).
\end{equation*}
respectively. Therefore, the scheme \eqref{EAVF} can be applied to
solve them. Since the matrix $QM$ is skew or symmetric negative
semi-definite in these two cases, according to the Remark
\ref{exam}, the convergence of fixed-point iterations for them is
independent of the differential matrix.

\section{Numerical experiments}\label{NUMEXPs}
{In this section, we compare the EAVF method \eqref{EAVF} with the
well-known implicit midpoint method which is denoted by MID:
\begin{equation}\label{MID}
y^{1}=y^{0}+hQ\nabla\widetilde{U}(\frac{y^{0}+y^{1}}{2}),
\end{equation}
and the traditional AVF method for the equation \eqref{LNODE} is
given by
\begin{equation}\label{AVF}
y^{1}=y^{0}+hQ\int_{0}^{1}\nabla\widetilde{U}((1-\tau)y^{0}+\tau y^{1})d\tau,
\end{equation}
where $\widetilde{U}(y)=U(y)+\frac{1}{2}y^{\intercal}My$. {The
authors in  \cite{Maclachlan1999} showed that \eqref{AVF} preserves
the first integral or the Lyapunov function $\widetilde{U}$. Our
comparison also includes another energy-preserving method of  order
four for \eqref{LNODE} :}
\begin{equation}\label{CRK}
\left\{\begin{aligned}
&y^{\frac{1}{2}}=y^{0}+hQ\int_{0}^{1}(\frac{5}{4}-\frac{3}{2}\tau)\nabla\widetilde{U}(y_{\tau})d\tau,\\
&y^{1}=y^{0}+hQ\int_{0}^{1}\nabla\widetilde{U}(y_{\tau})d\tau,\\
\end{aligned}\right.
\end{equation}
where
$$y_{\tau}=(2\tau-1)(\tau-1)y^{0}-4\tau(\tau-1)y^{\frac{1}{2}}+(2\tau-1)\tau
y^{1}.$$ This method is denoted by CRK since it can be written as a
continuous Runge--Kutta method. For details, readers are referred to
\cite{Hairer2010}.

Throughout the experiment, the `reference solution' is computed by
high-order methods with a sufficiently small stepsize.
 We always start to calculate from
$t_{0}=0$. $y^{n}\approx y(t_{n})$ is obtained by the time-stepping
way $y^{0}\rightarrow y^{1} \rightarrow\cdots\rightarrow
y^{n}\rightarrow\cdots$ for $n=1,2,\ldots$ and $t_{n}=nh$. The error
tolerance for iteration solutions of the four methods is set as
$10^{-14}$. The maximum global error (GE) over the total time
interval is defined by:
\begin{equation*}
GE=\max_{n\geq0}||y^{n}-y(t_{n})||_{\infty}.
\end{equation*}
The maximum global error of $H$ ($EH$) on the interval is:
\begin{equation*}
EH=\max_{n\geq0}|H^{n}-H(y(t_{n}))|.
\end{equation*}}
In our numerical experiments, the computational cost of each method
is measured by the number of function evaluations (FE).

\begin{myexp}
The motion of a triatomic molecule can be modeled by a Hamiltonian
system with the Hamiltonian of the form \eqref{Hamil} (see, e.g.
\cite{Cohen2006}):
\begin{equation}\label{realHamil}
H(p,q)=S(p,q)+\frac{1}{2}(p_{1,1}^{2}+p_{1,2}^{2}+p_{1,3}^{2})+\frac{\omega^{2}}{2}(q_{1,1}^{2}+q_{1,2}^{2}+q_{1,3}^{2}),
\end{equation}
where
$$S(p,q)=\frac{1}{2}p_{0}^{2}+\frac{1}{4}(q_{0}-q_{1,3})^{2}-\frac{1}{4}\frac{2q_{1,2}+q_{1,2}^{2}}{(1+q_{1,2})^{2}}(p_{0}-p_{1,3})^{2}
-\frac{1}{4}\frac{2q_{1,1}+q_{1,1}^{2}}{(1+q_{1,1})^{2}}(p_{0}+p_{1,3})^{2}.$$
The initial values are given by :
\begin{equation*}
\left\{\begin{aligned}
&p_{0}(0)=p_{1,1}(0)=p_{1,2}(0)=p_{1,3}(0)=1,\\
&q_{0}(0)=0.4, q_{1,1}(0)=q_{1,2}(0)=\frac{1}{\omega}, q_{1,3}=\frac{1}{2^{\frac{1}{2}}\omega}.\\
\end{aligned}\right.
\end{equation*}
Setting $h=1/2^{i}, i=6,\ldots,10, \omega=50$ and $h=1/100\times1/2^{i}, i=0,\ldots,4, \omega=100$,
we integrate the problem \eqref{NSP} with the Hamiltonian
\eqref{realHamil} over the interval $[0,50]$. Since the nonlinear
term $\nabla S(p,q)$ is complicated to be integrated, we evaluate
the integrals in EAVF, AVF and CRK by the $3$-point Gauss--Legendre (GL)
quadrature formula $(b_{i},c_{i})_{i=1}^{3}$:
$$b_{1}=\frac{5}{18}, b_{2}=\frac{4}{9}, b_{3}=\frac{5}{18};
\quad c_{1}=\frac{1}{2}-\frac{15^{\frac{1}{2}}}{10},
c_{2}=\frac{1}{2}, c_{3}=\frac{1}{2}+\frac{15^{\frac{1}{2}}}{10}.$$
Corresponding schemes are denoted by EAVFGL3, AVFGL3 and CRKGL3
respectively. Numerical results are presented in Figs. \ref{tri}.

Figs. \ref{tri}(a) and \ref{tri}(c) show that MID and AVFGL3 lost
basic accuracy. It can be observed from \ref{tri}(b) and
\ref{tri}(d) that AVFGL3, EAVFGL3, CRKGL3 are much more efficient in
preserving energy than MID. In the aspects of both energy
preservation and algebraic accuracy, EAVF is the most efficient
among the four methods.

\begin{figure}[ptb]
\centering
\begin{tabular}[c]{cccc}%
  \subfigure[]{\includegraphics[width=6cm,height=7cm]{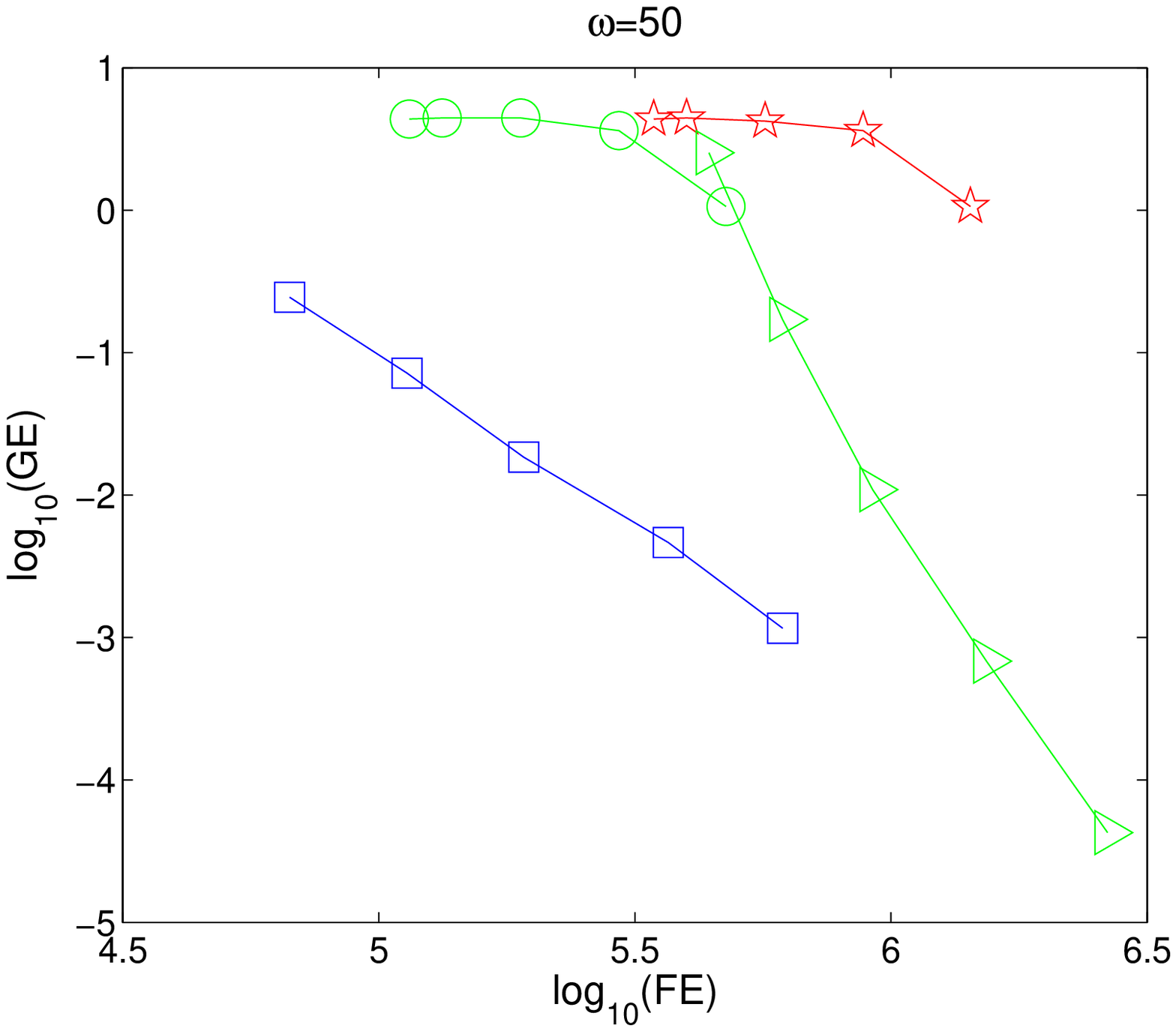}}
  \subfigure[]{\includegraphics[width=6cm,height=7cm]{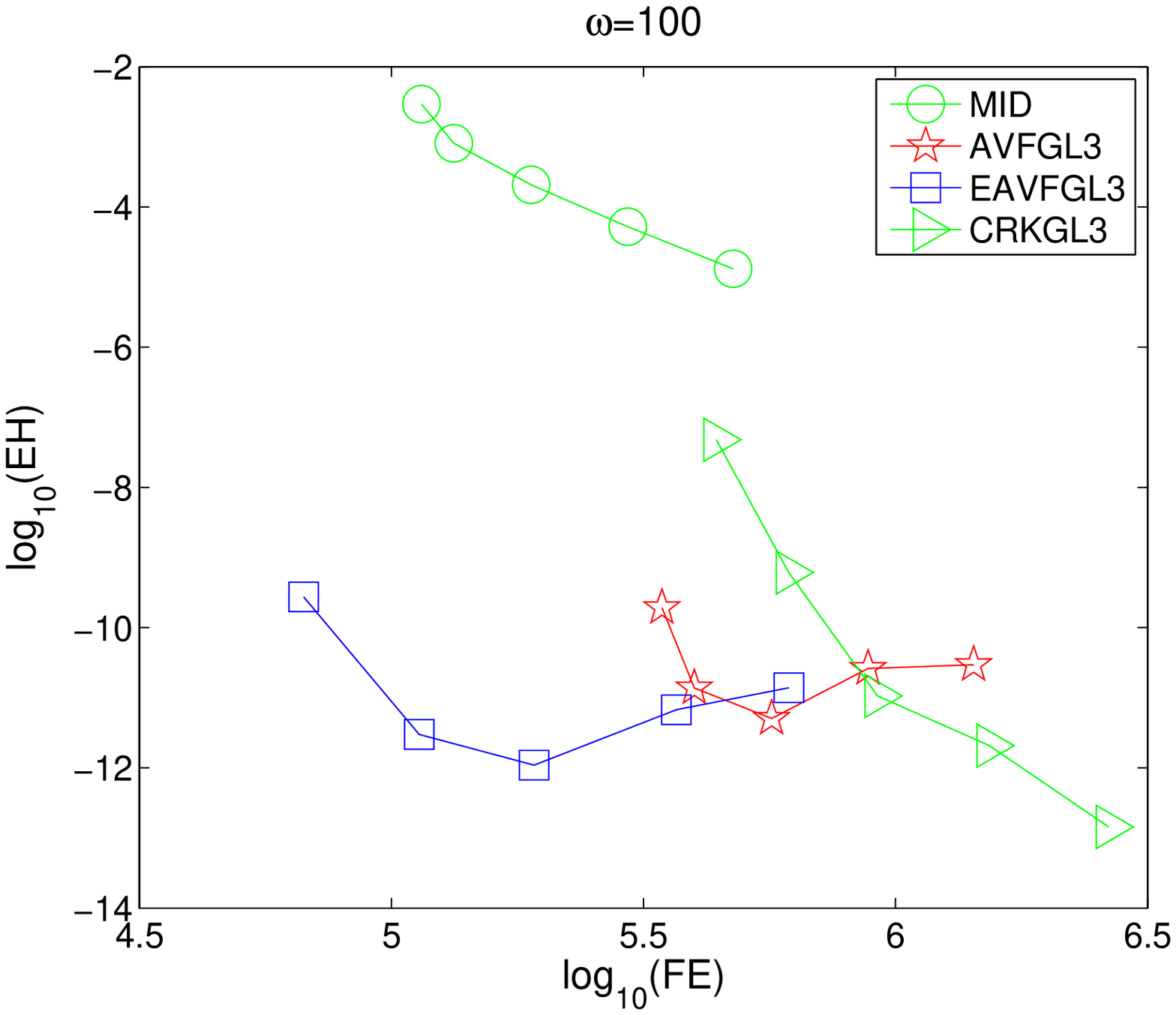}}
\end{tabular}
\begin{tabular}[c]{cccc}%
  \subfigure[]{\includegraphics[width=6cm,height=7cm]{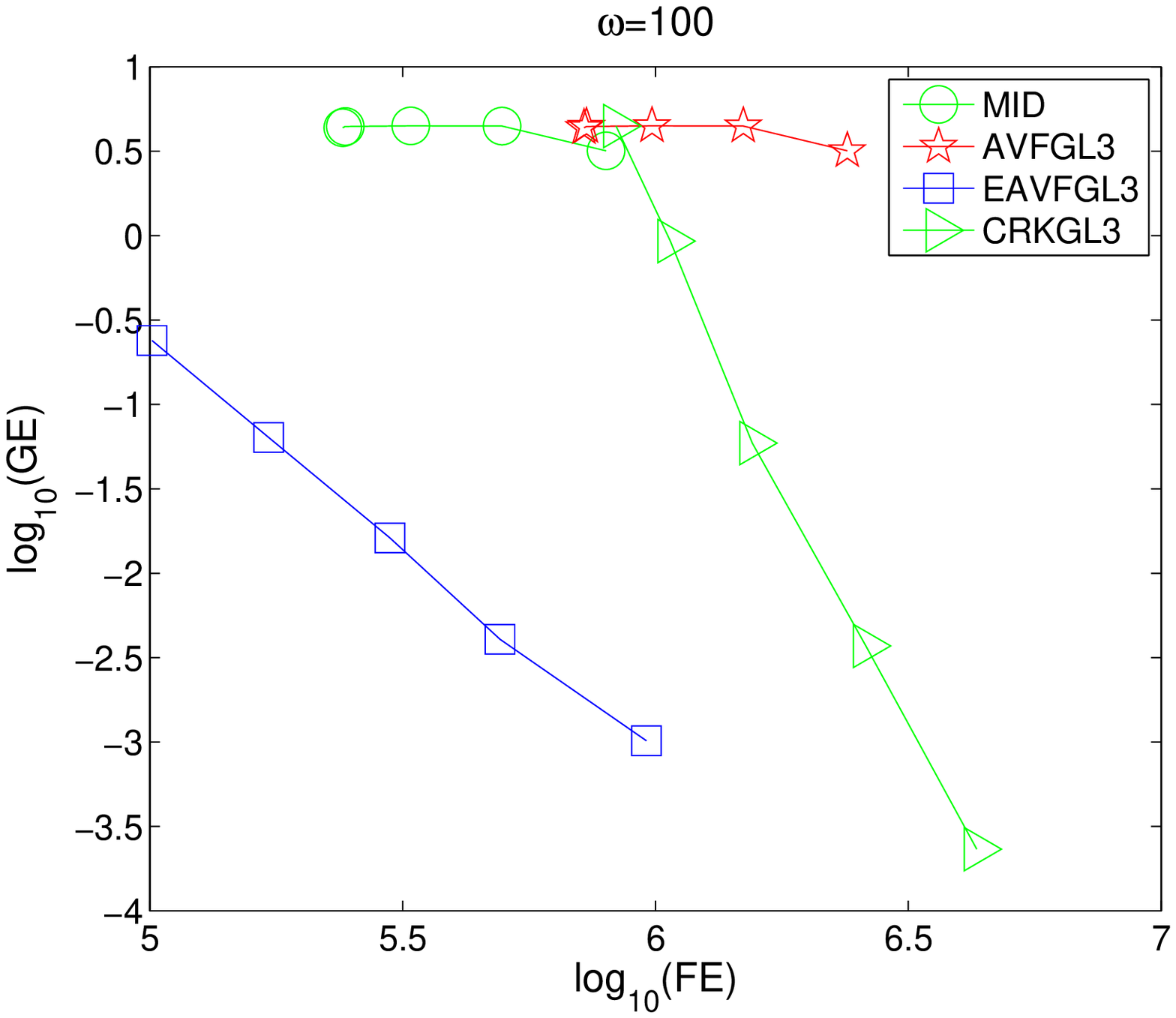}}
  \subfigure[]{\includegraphics[width=6cm,height=7cm]{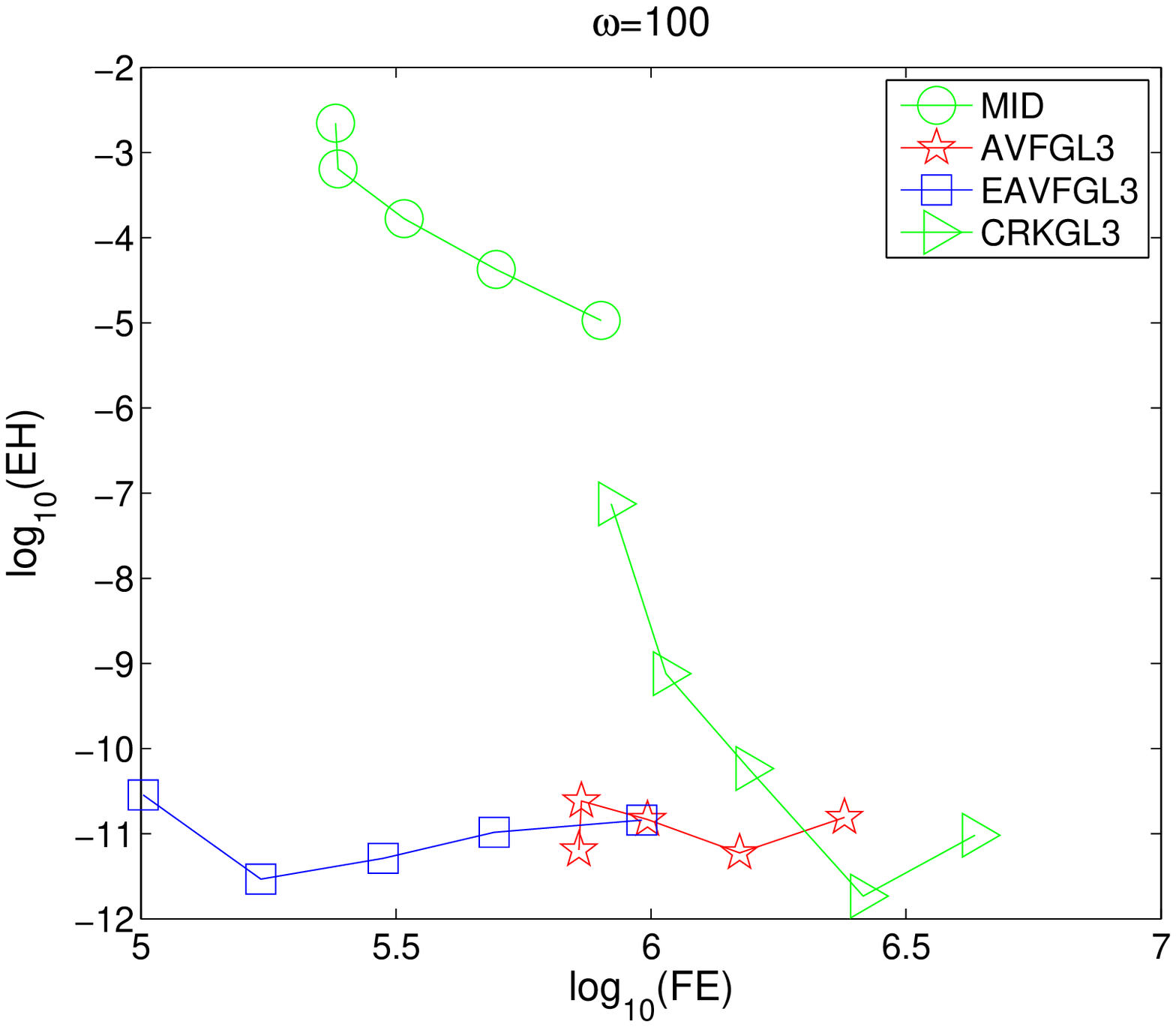}}
\end{tabular}
\caption{Efficiency curves.}
\label{tri}
\end{figure}

\end{myexp}

\begin{myexp}
The equation
\begin{equation}\label{wind}
\begin{aligned}
&\dot{x}_{1}=-\zeta x_{1}-\lambda x_{2}+x_{1}x_{2},\\
&\dot{x}_{2}=\lambda x_{1}-\zeta x_{2}+\frac{1}{2}(x_{1}^{2}-x_{2}^{2}),\\
\end{aligned}
\end{equation}
is an averaged system in wind-induced oscillation, where
$\zeta\geq0$ is a damping factor and $\lambda$ is a detuning
parameter (see, e.g. \cite{Guckenheimer}). For convenience, setting
$\zeta=rcos(\theta), \lambda=rsin(\theta), r\geq0,
0\leq\theta\leq\pi/2$, (see \cite{Maclachlan1998}) we write
\eqref{wind} as
\begin{equation}\label{wind2}
\begin{aligned}
\left(\begin{array}{c}\dot{x}_{1}\\ \dot{x}_{2}\end{array}\right)=\left(\begin{array}{cc}-cos(\theta)&-sin(\theta)\\sin(\theta)&-cos(\theta)\end{array}\right)
\left(\begin{array}{c}rx_{1}-\frac{1}{2}sin(\theta)(x_{2}^{2}-x_{1}^{2})-cos(\theta)x_{1}x_{2}\\
rx_{2}-sin(\theta)x_{1}x_{2}+\frac{1}{2}cos(\theta)(x_{2}^{2}-x_{1}^{2})\end{array}\right),
\end{aligned}
\end{equation}
which is of the form \eqref{LNODE}, where
\begin{equation}
Q=\left(\begin{array}{cc}-cos(\theta)&-sin(\theta)\\sin(\theta)&-cos(\theta)\end{array}\right),\quad
M=\left(\begin{array}{cc}r&0\\0&r\end{array}\right),\quad U=-\frac{1}{2}sin(\theta)(x_{1}x_{2}^{2}-\frac{1}{3}x_{1}^{3})+\frac{1}{2}cos(\theta)(\frac{1}{3}x_{2}^{3}-x_{1}^{2}x_{2}).
\end{equation}
Its Lyapunov function
(dissipative case, when $\theta<\pi/2$) or the first integral
(conservative case, when $\theta=\pi/2$) is:
$$H=\frac{1}{2}r(x_{1}^{2}+x_{2}^{2})-\frac{1}{2}sin(\theta)(x_{1}x_{2}^{2}-\frac{1}{3}x_{1}^{3})+\frac{1}{2}cos(\theta)(\frac{1}{3}x_{2}^{3}-x_{1}^{2}x_{2}).$$
{The matrix exponential of the EAVF scheme \eqref{EAVF} for \eqref{wind2}
are calculated by:
\begin{equation*}
\exp(V)=\left(\begin{array}{cc}\exp(-hcr)cos(hsr)&-\exp(-hcr)sin(hsr)\\
\exp(-hcr)sin(hsr)&\exp(-hcr)cos(hsr)\end{array}\right),
\end{equation*}
where $c=cos(\theta), s=sin(\theta)$, and $\varphi(V)$ can be
obtained by $(exp(V)-I)V^{-1}$.} Given the initial values:
\begin{equation*}
x_{1}(0)=0,x_{2}(0)=1,
\end{equation*}
we first integrate the conservative system \eqref{wind2} with the
parameters $\theta=\pi/2, r=20$ and stepsizes $h=1/20\times1/2^{i},
i=-1,\ldots,4$ over the interval $[0,200]$. Setting
$\theta=\pi/2-10^{-4}, r=20,$ we then integrate the dissipative
\eqref{wind2} with the stepsizes $h=1/20\times1/2^{i},
i=-1,\ldots,4$ over the interval $[0,100]$. Numerical errors are
presented in Figs. \ref{windconserv}, \ref{winddissip}. It is noted
that  the integrands appearing in AVF, EAVF are polynomials of
degree  two and the integrands in CRK are polynomials of degree
five. We evaluate the integrals in AVF, EAVF by the 2-point GL
quadrature:
$$b_{1}=\frac{1}{2}, b_{2}=\frac{1}{2},\quad c_{1}=\frac{1}{2}-\frac{3^{\frac{1}{2}}}{6}, c_{2}=\frac{1}{2}+\frac{3^{\frac{1}{2}}}{6},$$
and the integrals appearing in CRK by the 3-point GL quadrature.
Then there is no quadrature error.

The efficiency curves of AVF and MID consist of only five points in
Figs. \ref{windconserv} (a), \ref{windconserv} (b), \ref{winddissip}
(a) (two points overlap in Figs. \ref{windconserv} (a),
\ref{winddissip} (a)), since the fixed-point iterations of MID and
AVF are not convergent when $h=1/10$. Note that $QM$ is
skew-symmetric or negative semi-definite, the convergence of
iterations for the EAVF method is independent of $r$ by Theorem
\ref{iter} and Remark \ref{exam}. Thus larger stepsizes are allowed
for EAVF. The experiment shows that the iterations of EAVF uniformly
work for $h=1/20\times1/2^{i}, i=-1,\ldots,4$. Moreover, it can be
observed from Fig. \ref{windconserv}(d) that MID cannot strictly
preserve the decay of the Lyapunov function.

\begin{figure}[ptb]
\centering
\begin{tabular}[c]{cccc}%
  \subfigure[]{\includegraphics[width=6cm,height=7cm]{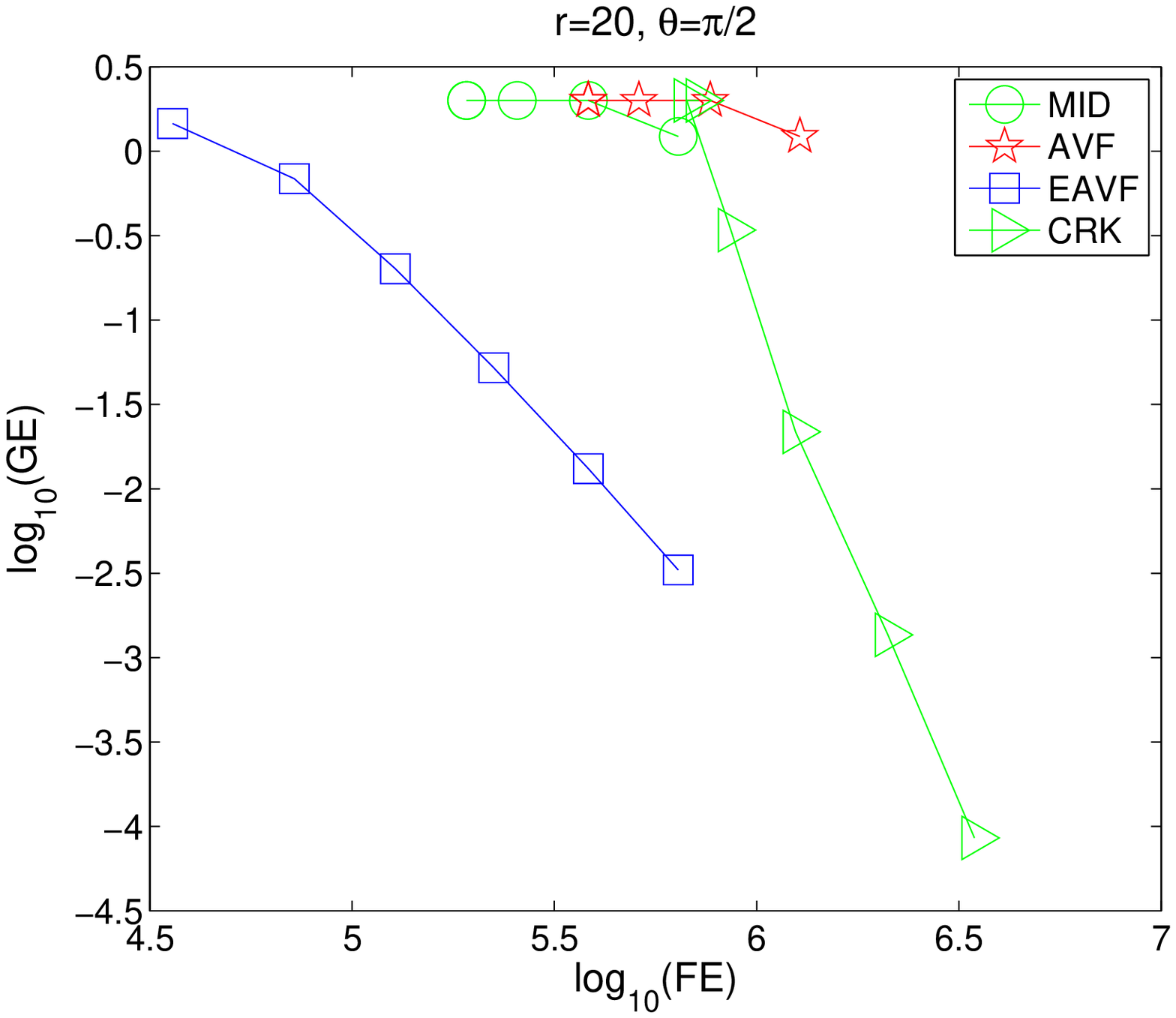}}
  \subfigure[]{\includegraphics[width=6cm,height=7cm]{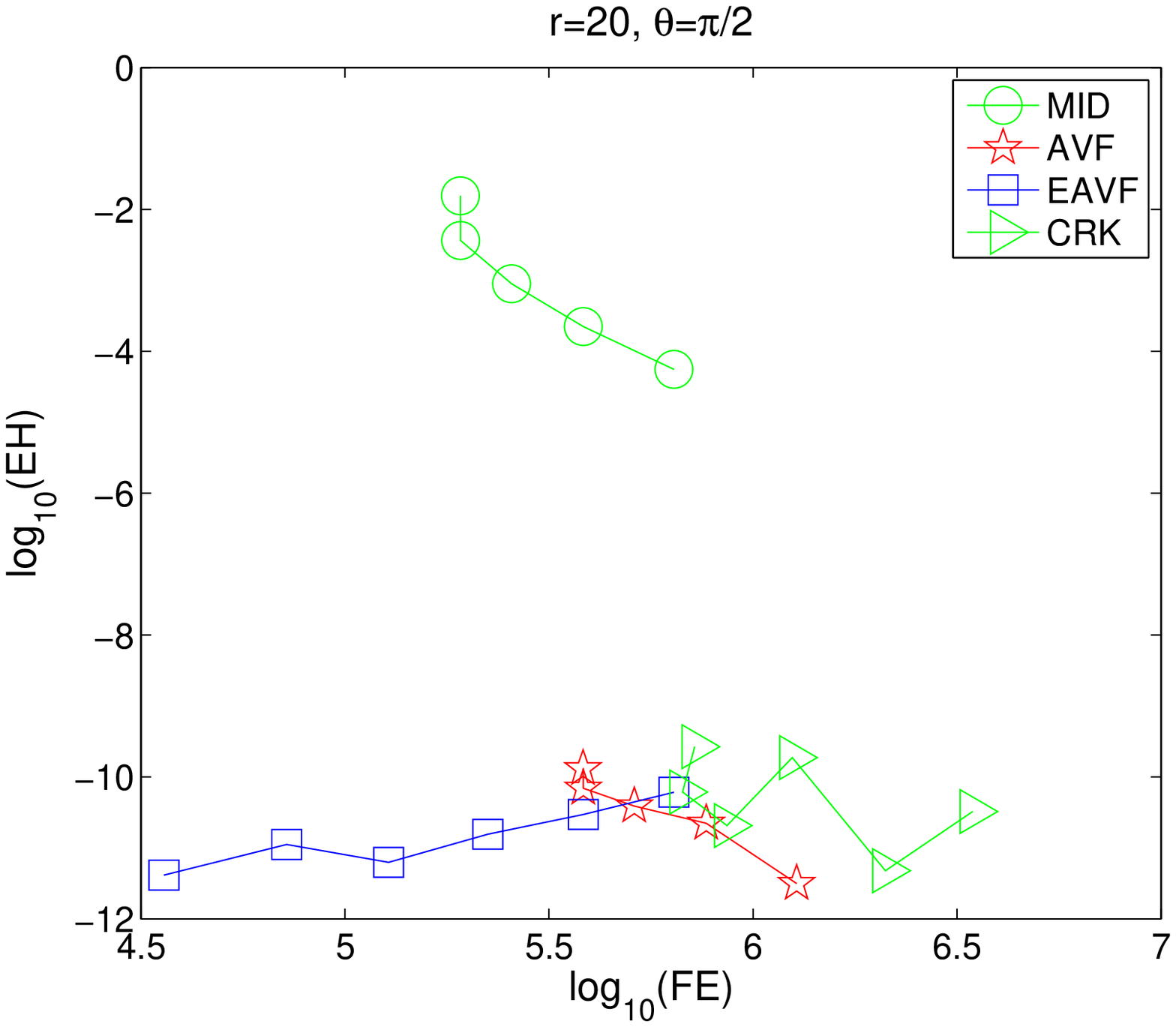}}
\end{tabular}
\caption{Efficiency curves.}
\label{windconserv}
\end{figure}

\begin{figure}[ptb]
\centering
\begin{tabular}[c]{cccc}%
  \subfigure[]{\includegraphics[width=6cm,height=7cm]{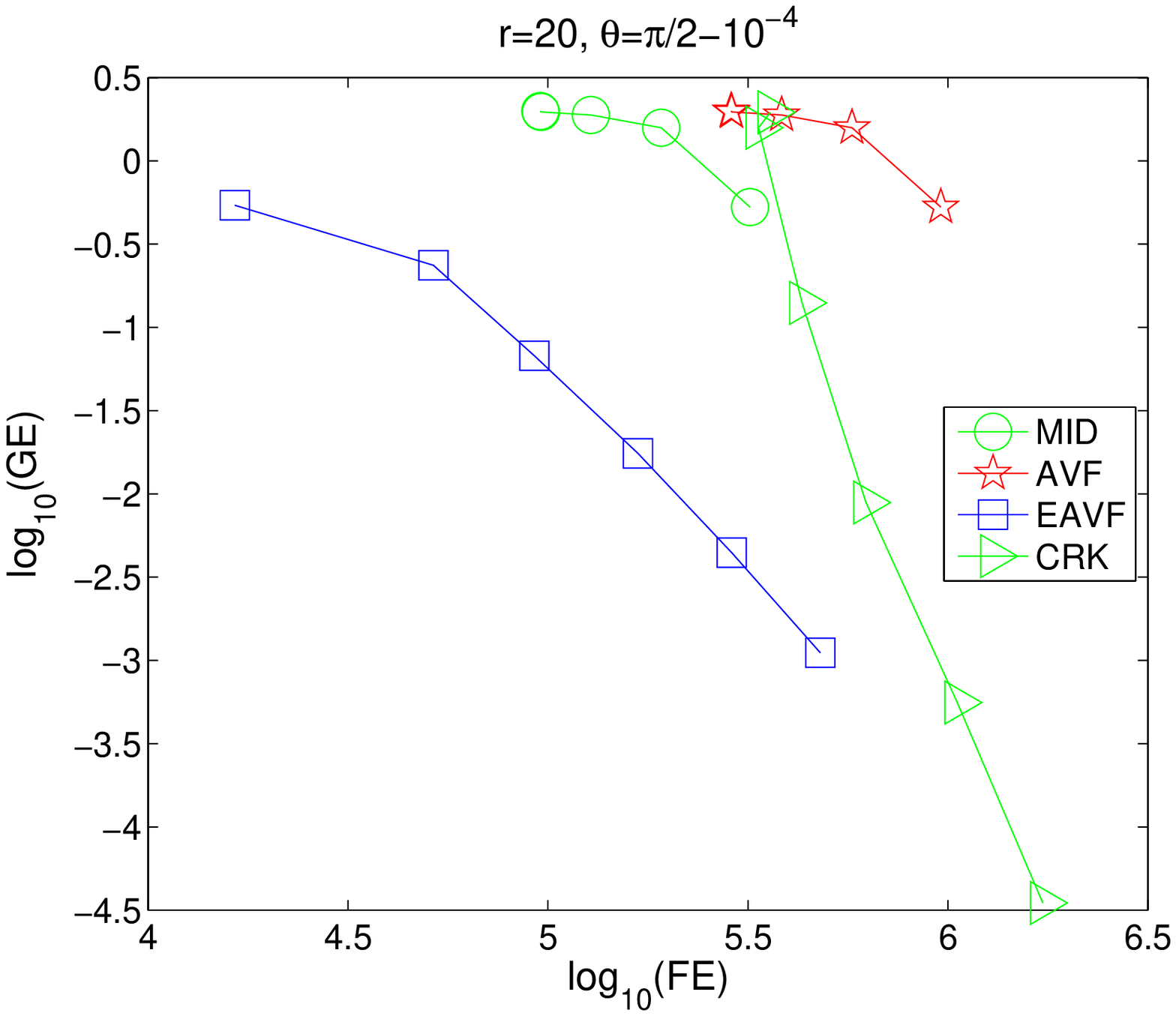}}
  \subfigure[]{\includegraphics[width=6cm,height=7cm]{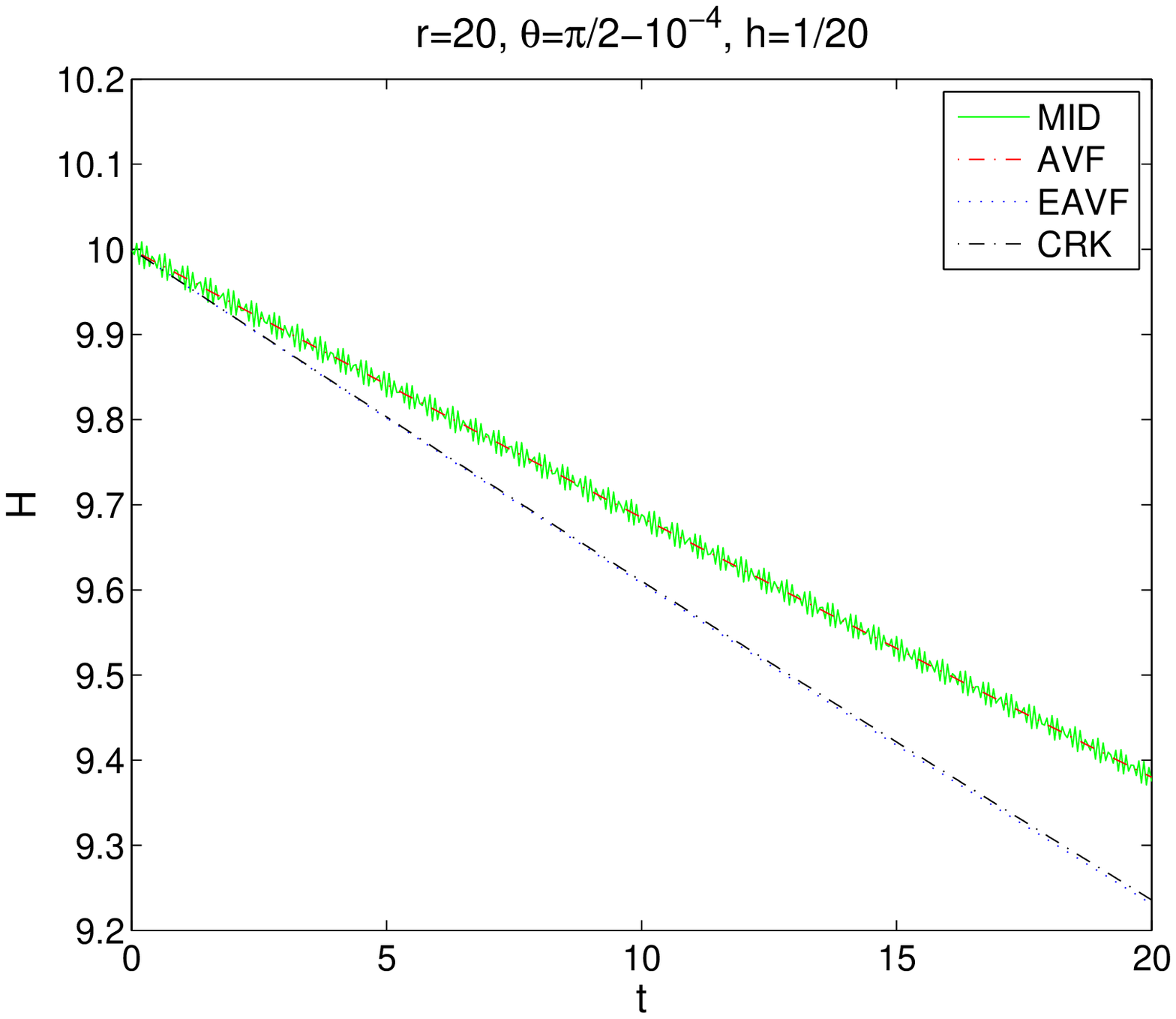}}
\end{tabular}
\caption{(a) Efficiency curves. (b) The Lyapunov function against time $t$.}
\label{winddissip}
\end{figure}

\end{myexp}

\begin{myexp}
The PDE:
\begin{equation}\label{CFPU}
\frac{\partial^{2}u}{\partial t^{2}}=\beta\frac{\partial^{3}u}{\partial t\partial x^{2}}+\frac{\partial^{2}u}{\partial x^{2}}
\left(1+\varepsilon\left(\frac{\partial u}{\partial x}\right)^{p}\right)-\gamma\frac{\partial u}{\partial t}-m^{2}u,
\end{equation}
where $\varepsilon>0, \beta, \gamma\geq0$, is a continuous
generalization of $\alpha$-FPU (Fermi-Pasta-Ulam) system (see, e.g.
\cite{Macias2009}). Taking $\partial_{t}u=v$ and the homogeneous
Dirichlet BC $u(0,t)=u(L,t)=0$, the equation \eqref{CFPU} is of the
type \eqref{TDPDE}, where $X=\mathbf{L}^{2}([0,L])\times
\mathbf{L}^{2}([0,L])$ and
\begin{equation*}
y=\left(\begin{array}{c}u\\v\end{array}\right),\quad\mathcal{Q}=\left(\begin{array}{cc}0&1\\-1&\beta\partial_{x}^{2}-\gamma\end{array}\right),\quad
\mathcal{H}[y]=\int_{0}^{L}\left(\frac{1}{2}u_{x}^{2}+\frac{m^{2}}{2}u^{2}+\frac{v^{2}}{2}+\frac{\varepsilon u_{x}^{p+2}}{(p+2)(p+1)}\right)dx.
\end{equation*}
It is easy to verify that $\mathcal{Q}$ is a negative semi-definite
operator, and thus \eqref{CFPU} is dissipative. {The spatial
discretization yields} a dissipative system of ODEs:
\begin{equation*}
\ddot{u}_{j}(t)-c^{2}(u_{j-1}-2u_{j}+u_{j+1})+m^{2}u_{j}-\beta^{'}(\dot{u}_{j-1}-2\dot{u}_{j}+\dot{u}_{j+1})+\gamma\dot{u}_{j}(t)
=\varepsilon^{'}(V^{'}(u_{j+1}-u_{j})-V^{'}(u_{j}-u_{j-1})),
\end{equation*}
where $c=1/\Delta x, \beta^{'}=c^{2}\beta,
\varepsilon^{'}=c^{p+2}\varepsilon, V(u)=u^{p+2}/[(p+2)(p+1)],
u_{j}(t)\approx u(x_{j},t), x_{j}=j/\Delta x$ for $j=1,\ldots,N-1$
and $u_{0}(t)=u_{N}(t)=0$. {Note that the nonlinear term
$u_{xx}u_{x}^{p}$ is approximated by :
\begin{equation*}
\frac{\partial^{2}u}{\partial x^{2}}\left(\frac{\partial u}{\partial x}\right)^{p}|_{x=x_{j}}=\frac{1}{p+1}\partial_{x}\left(\frac{\partial u}{\partial x}\right)^{p+1}|_{x=x_{j}}
\approx\frac{1}{p+1}\left(\left(\frac{u_{j+1}-u_{j}}{\Delta x}\right)^{p+1}-\left(\frac{u_{j}-u_{j-1}}{\Delta x}\right)^{p+1}\right)/\Delta x.
\end{equation*}}

We now write it in the compact form \eqref{2th}:
\begin{equation*}
\ddot{q}-N\dot{q}+\Omega q=-\nabla U_{1}(q),
\end{equation*}
where $q=(u_{1},\ldots,u_{N-1})^{\intercal}, N=\beta^{'}D-\gamma I, \Omega=-c^{2}D+m^{2}I,
U_{1}(q)=\varepsilon^{'}\sum_{j=0}^{N-1}V(u_{j+1}-u_{j})$ and
\begin{equation*}
D=\left(\begin{array}{ccccc}-2&1& & & \\1&-2&1& & \\&\ddots&\ddots&\ddots& \\ & &1&-2&1\\& & &1&-2\end{array}\right).
\end{equation*}
In this experiment, we set $p=1, m=0, c=1, \varepsilon=\frac{3}{4},$
and $\gamma=0.005.$ Consider the initial conditions in
\cite{Macias2009}:
\begin{equation*}
\phi_{j}(t)=B\ln\left\{\left(\frac{1+\exp[2(\kappa(j-97)+t\sinh(\kappa))]}{1+\exp[2(\kappa(j-96)+t\sinh(\kappa))]}\right)
\left(\frac{1+\exp[2(\kappa(j-32)+t\sinh(\kappa))]}{1+\exp[2(\kappa(j-33)+t\sinh(\kappa))]}\right)\right\}
\end{equation*}
with $B=5, \kappa=0.1$, that is,
\begin{equation*}
\left\{\begin{aligned}
&u_{j}(0)=\phi_{j}(0),\\
&v_{j}(0)=\dot{\phi}_{j}(0).\\
\end{aligned}\right.
\end{equation*}
for $j=1,\ldots,N-1$. Let $N=128, \beta=0, 2.$ We compute the
numerical solution by MID, AVF and EAVF with the stepsizes
$h=1/2^{i}, i=1,\ldots,5$ over the time interval $[0,100]$.
{Similarly to EAVF \eqref{EAVF1}, the nonlinear systems resulting
from MID \eqref{MID} and AVF \eqref{AVF} can be reduced to:
\begin{equation*}
q^{1}=q^{0}+hp^{0}+\frac{h}{2}N(q^{1}-q^{0})-\frac{h^{2}}{4}\Omega(q^{1}+q^{0})-\frac{h^{2}}{2}\nabla U_{1}(\frac{q^{0}+q^{1}}{2}),
\end{equation*}
and
\begin{equation*}
q^{1}=q^{0}+hp^{0}+\frac{h}{2}N(q^{1}-q^{0})-\frac{h^{2}}{4}\Omega(q^{1}+q^{0})-\frac{h^{2}}{2}\int_{0}^{1}\nabla U_{1}((1-\tau)q^{0}+\tau q^{1})d\tau
\end{equation*}
respectively. Both the velocity $p^{1}$ of MID and AVF can be recovered by
\begin{equation*}
\frac{q^{1}-q^{0}}{h}=\frac{p^{1}+p^{0}}{2}.
\end{equation*}}
The integrals in AVF and EAVF are exactly evaluated by the 2-point GL quadrature.
Since $\exp(hA), \varphi(hA)$  in \eqref{EAVF1} have no explicit
expressions, they are calculated by the Matlab package in
\cite{Berland2007}. The basic idea is evaluating $\exp(hA),
\varphi(hA)$ by their Pad\'{e} approximations. Numerical results are
plotted in Figs. \ref{FPU}. Alternatively, there are other popular
algorithms such as contour integral method and Krylov subspace
method for matrix exponentials and $\varphi$-functions. Readers are
referred to \cite{Hochbruck2010} for a summary of algorithms and
well-established mathematical software.

According to Theorem \ref{iter2}, the convergence of iterations in
the EAVF scheme is independent of $\Omega$ and $N$. Iterations of
MID and AVF are not convergent when $\beta=2, h=1/2$. Thus the
efficiency curves of MID and AVF in Fig. \ref{FPU}(b) consist of
only $4$ points. From Fig. \ref{FPU}(c), it can be observed that the
EAVF method can preserve dissipation even using the relatively large
stepsize $h=1/2$.

\begin{figure}[ptb]
\centering
\begin{tabular}[c]{cccc}%
  \subfigure[]{\includegraphics[width=4.5cm,height=7cm]{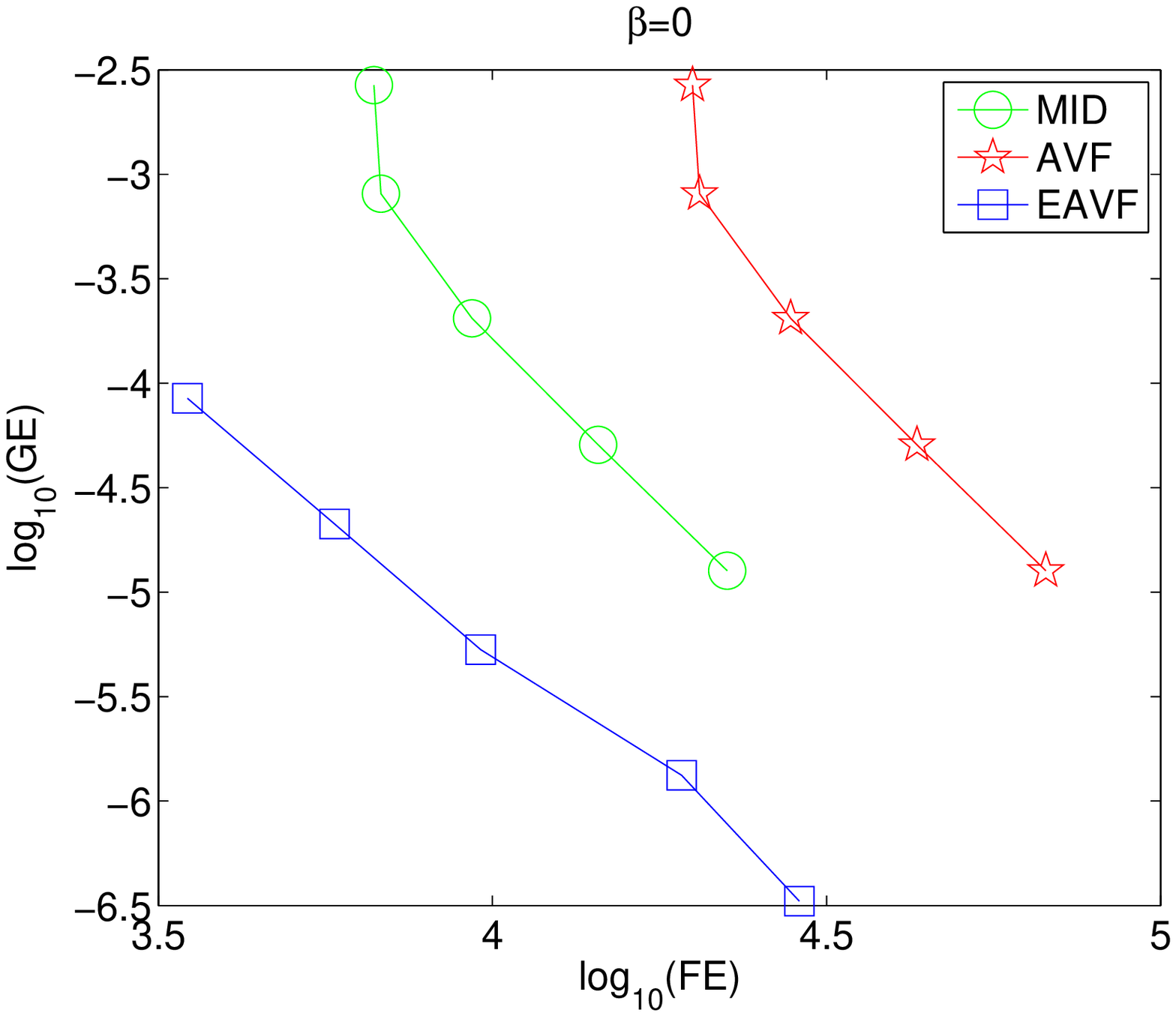}}
  \subfigure[]{\includegraphics[width=4.5cm,height=7cm]{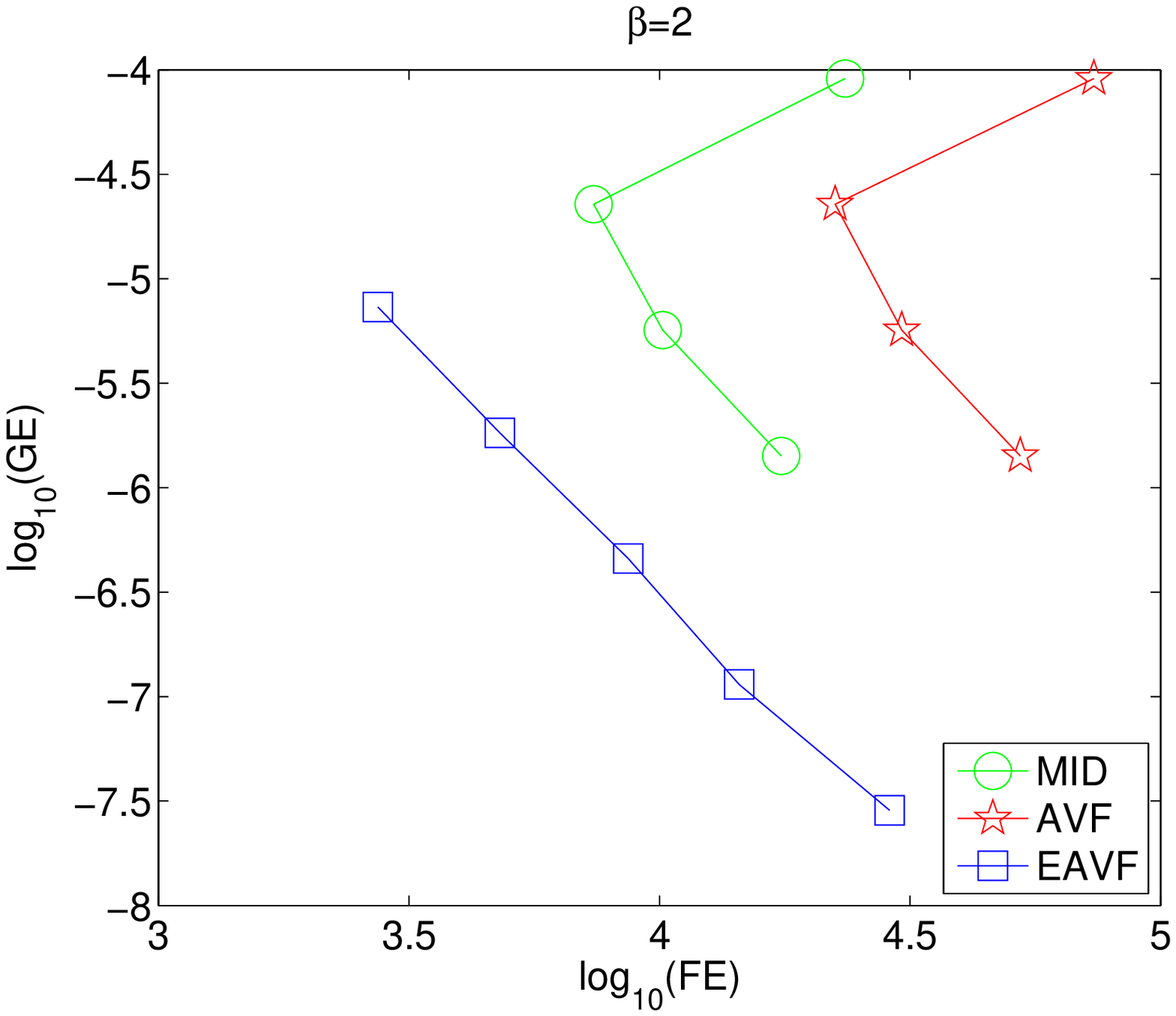}}
  \subfigure[]{\includegraphics[width=4.5cm,height=7cm]{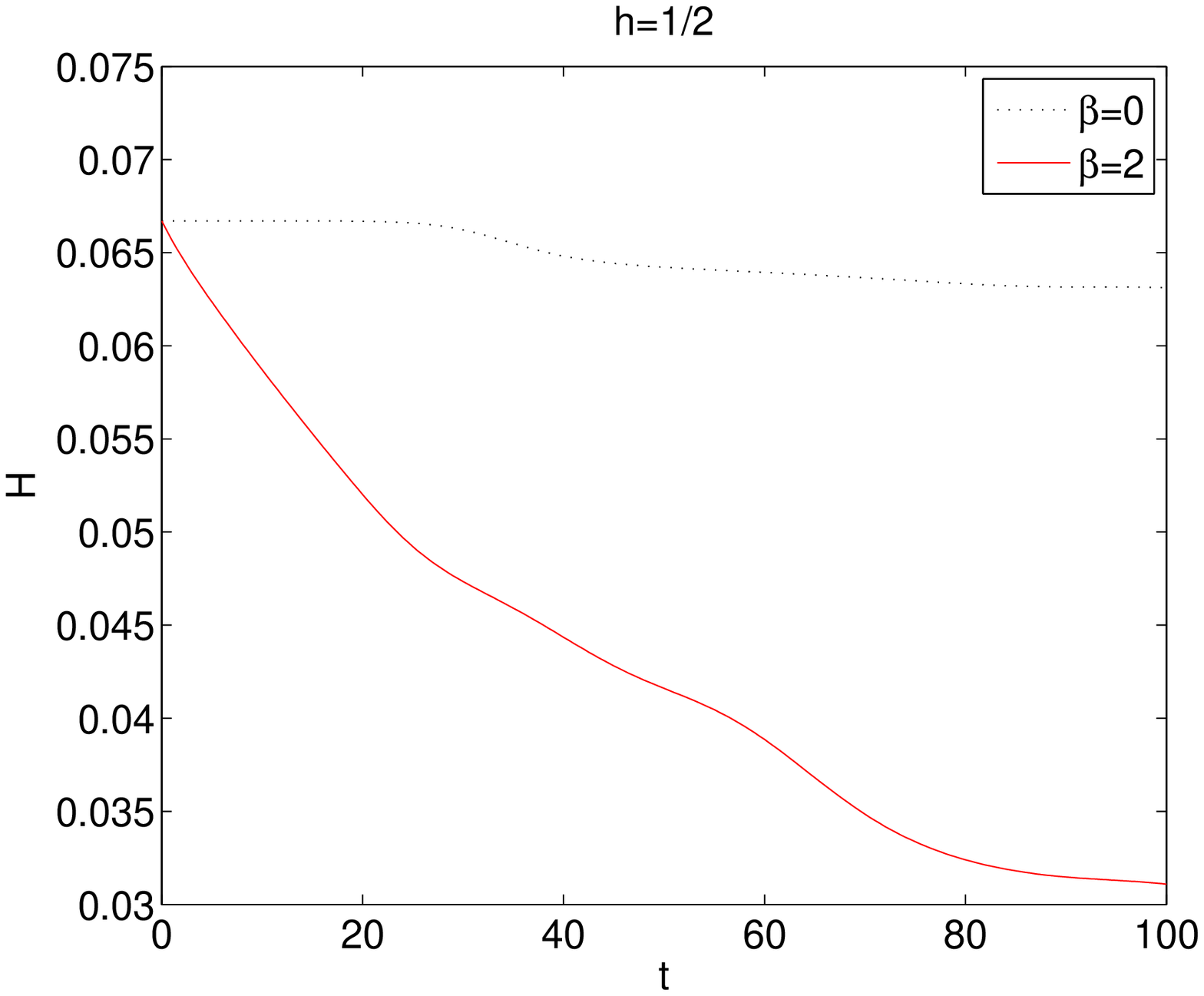}}
\end{tabular}
\caption{(a)\ (b)
 Efficiency curves. (c) The decay of Lyapunov
function obtained by EAVF.}

\label{FPU}
\end{figure}

\end{myexp}

\newpage
\section{Conclusions}
Exponential integrators can be traced back to the original paper by
Hersch \cite{Hersch1958}. The term `exponential integrators' was
coined in the seminal paper by Hochbruck, Lubich and Selhofer
\cite{Hochbruck1998}. It turns out  that exponential integrators
have constituted an important class of schemes for the numerical
simulation of differential equations. In this paper, combining the
ideas of the exponential integrator with the average vector field,
we  derived and analyzed a new exponential scheme EAVF preserving
the first integral or the Lyapunov function for the conservative or
dissipative system \eqref{LNODE}, which includes numerous important
mathematical models in applications. The symmetry of EAVF ensures
the prominent long-term numerical behavior. Due to the implicity of
EAVF requires iteration solutions, we  analysed the convergence of
the fixed-point iteration and showed that the convergence is free
from the influence of a wide range of coefficient matrices $M$. In
the dynamics of the triatomic molecule, the wind-induced oscillation
and the damped FPU problem, we compared the new EAVF method with the
MID, AVF and CRK methods. The three problems are modeled by the
system \eqref{LNODE} having a dominant linear term and small
nonlinear term. In the aspects of algebraic accuracy as well as
preserving energy and dissipation, EAVF is very efficient among the
four methods. In general, energy-preserving and energy-decaying
methods are implicit, and then iteration solutions are required.
With a relatively large stepsize, the iterations of EAVF are
convergent, whereas AVF and MID do not work in experiments.
Therefore, EAVF is expected to be a promising method solving the
system \eqref{LNODE} with $||QM||\gg||QHess(U)||$.

\subsubsection*{Acknowledgments.}
The authors are sincerely thankful to two anonymous referees for
their valuable suggestions, which help improve the presentation of
the manuscript.

\end{document}